%A write up by Anders, Jana, Sylvester and Xiaodan on Green functions.
\documentclass[a4paper,twoside]{article}
\usepackage{a4}
\usepackage{amssymb}
\usepackage{amsmath}
\usepackage{url}
\usepackage{upref}
\usepackage[active]{srcltx}
\usepackage[dvipsnames]{xcolor} % Should be loaded before pagebackref
\usepackage[pagebackref,colorlinks,citecolor=blue,linkcolor=blue,urlcolor=blue]{hyperref}
\allowdisplaybreaks[2] % To make it possible for displaybreaks
%
%       Here we fix so that the date and time are shown on
%       the pages.
%
%       First we calculate the time
%
\newcount\minutes \newcount\hours
\hours=\time
\divide\hours 60
\minutes=\hours
\multiply\minutes -60
\advance\minutes \time
\newcommand{\klockan}{\the\hours:{\ifnum\minutes<10 0\fi}\the\minutes}
\newcommand{\tid}{\today\ \klockan}
\newcommand{\prtid}{\smash{\raise 10mm \hbox{\LaTeX ed \tid}}}
\renewcommand{\prtid}{}
%
%       Here the headings, with time is fixed.
%       Changed to give article headings, as in a journal
%       One should have a \markboth in the begining of the document
%
\makeatletter
\pagestyle{headings}
\headheight 10pt
\def\sectionmark#1{} %\markboth{{\sectnr #1}}{{\sectnr #1}}} %Journal
\def\subsectionmark#1{}
\newcommand{\sectnr}{\ifnum \c@secnumdepth >\z@
                 \thesection.\hskip 1em\relax \fi}
\def\@evenhead{\footnotesize\rm\thepage\hfil\leftmark\hfil\llap{\prtid}}
\def\@oddhead{\footnotesize\rm\rlap{\prtid}\hfil\rightmark\hfil\thepage}
\def\tableofcontents{\section*{Contents} %\@mkboth{Contents}{Contents}} %Journal
 \@starttoc{toc}}
\makeatother
%
%       Here we change the behaviour of the reference list
%       we want ref like 1. instead of [1].
%
\makeatletter
\def\@biblabel#1{#1.}
\makeatother
%
%       Here we remove the setting of headlines by thebibliography,
%       so that it remains author/title.
%
%       We also change the vertical spacing.
%
\makeatletter
\let\Thebibliography=\thebibliography
\renewcommand{\thebibliography}[1]{\def\@mkboth##1##2{}\Thebibliography{#1}
\addcontentsline{toc}{section}{References}
\frenchspacing % Maybe not needed
% Deleting extra vertical space
\setlength{\@topsep}{0pt}% Delete if extra space before list
\setlength{\itemsep}{0pt}%
\setlength{\parskip}{0pt plus 2pt}%
}
\makeatother
%
%       Here we change the behaviour of \ldots and \cdots to be
%       the same as in Acta Math.
%
\makeatletter
\def\mdots@{\mathinner.\nonscript\!.%
 \ifx\next,.\else\ifx\next;.\else\ifx\next..\else
 \nonscript\!\mathinner.\fi\fi\fi}
\let\ldots\mdots@
\let\cdots\mdots@
\let\dotso\mdots@
\let\dotsb\mdots@
\let\dotsm\mdots@
\let\dotsc\mdots@
\def\vdots{\vbox{\baselineskip2.8\p@ \lineskiplimit\z@
    \kern6\p@\hbox{.}\hbox{.}\hbox{.}\kern3\p@}}
\def\ddots{\mathinner{\mkern1mu\raise8.6\p@\vbox{\kern7\p@\hbox{.}}%
    \raise5.8\p@\hbox{.}\raise3\p@\hbox{.}\mkern1mu}}
\makeatother
%
%      Here we change the vertical spacing and labels of the enumerate environment
%
\makeatletter
\let\Enumerate=\enumerate
\renewcommand{\enumerate}{\Enumerate%
% Deleting extra vertical space
\setlength{\@topsep}{0pt}% Delete if extra space before list
\setlength{\itemsep}{0pt}%
\setlength{\parskip}{0pt plus 1pt}%
\renewcommand{\theenumi}{\textup{(\alph{enumi})}}%
\renewcommand{\labelenumi}{\theenumi}%
}
\let\endEnumerate=\endenumerate
\renewcommand{\endenumerate}{\endEnumerate\unskip}
\makeatother
%
%  The following command adjust the width so that the left margin
%  before the label in enumerate is the same as if the label was (a).
%  It should come before the enumerate environment,
%  but should preferably be local.
%  Only works at the first level.
%  (This avoids using some additonal package.)
%
%  By Anders Bj\"orn 2019-06-12
%
\newcommand{\addjustenumeratemargin}[1]{%
\setbox0\hbox{(a)} % Label to adjust width to 
\setbox1\hbox{#1} % Our labels here 
\addtolength{\leftmargini}{-\wd0}%
\addtolength{\leftmargini}{\wd1}%
}
%
%       Here we add points to sections etc.
%       That is, 'Section 2.3.' instead of only 'Section 2.3'
%
\makeatletter
\def\@seccntformat#1{\csname the#1\endcsname.\quad}
\makeatother
%
%       Definition to enter the title and author once
%       if the title in headings is different from the original title
%       use the third argument, otherwise leave the third argument empty
%
%       by Anders Bj\"orn 2025-02-07
%
\RequirePackage{ifthen}
\newcommand{\authortitle}[3]{\author{#1}\title{#2}%
   \ifthenelse{\equal{#3}{}}{\markboth{#1}{#2}}{\markboth{#1}{#3}}}
%
%       Reference definitions
%
\newcommand{\auth}[2]{{#1, #2.}}
\newcommand{\art}[6]{{\sc #1, \rm #2, \it #3\/ \bf #4 \rm (#5), \mbox{#6}.}}
\newcommand{\artin}[3]{{\sc #1, \rm #2,  in #3.}}
\newcommand{\artprep}[3]{{\sc #1, \rm #2, \rm #3.}}

\newcommand{\book}[3]{{\sc #1, \it #2, \rm #3.}}
\newcommand{\AND}{{\rm and }}
\newcommand{\arXiv}[1]{{\tt \href{https://arxiv.org/abs/#1}{arXiv:#1}}}

%
%       Theorems and other things numbered
%
\RequirePackage{amsthm}
\newtheoremstyle{descriptive}%
  {\topsep}   %{\medskipamount}          % Space above
  {\topsep}   %  {\medskipamount}          % Space below
  {\rmfamily} % Body font
  {}          % Indent
  {\bfseries} % Head font
  {.}         % Punctuation after thm head
  { }         % Space after thm head
  {}          % Thm head spec(?)
\newtheoremstyle{propositional}%
  {\topsep}   %  {\medskipamount}          % Space above
  {\topsep}   %  {\medskipamount}          % Space below
  {\itshape}  % Body font
  {}          % Indent
  {\bfseries} % Head font
  {.}         % Punctuation after thm head
  { }         % Space after thm head
  {}          % Thm head spec(?)
\theoremstyle{propositional}
\newtheorem{thm}{Theorem}[section]
\newtheorem{prop}[thm]{Proposition}
\newtheorem{lem}[thm]{Lemma}
\newtheorem{cor}[thm]{Corollary}
\newtheorem{theorem}[thm]{Theorem}
\theoremstyle{descriptive}
\newtheorem{deff}[thm]{Definition}
\newtheorem{example}[thm]{Example}
\newtheorem{remark}[thm]{Remark}
%
% Here we redfine the proof environment to get the same spacing
% before and after it as before and after theorems.
%
% Based on amsthm.sty 2.20, 2004/08/06
%
% By Anders Bj\"orn, 2004-03-18
%
% Recall that you can get a \qed wherever you want using \qedhere.
%
\makeatletter
\renewenvironment{proof}[1][\proofname]{\par
  \pushQED{\qed}%
  \normalfont
%\topsep6\p@\@plus6\p@\relax % Removed by Anders Bj\"orn
  \trivlist
  \item[\hskip\labelsep
        \itshape
    #1\@addpunct{.}]\ignorespaces
}{%
  \popQED\endtrivlist\@endpefalse
}
\makeatother
%
%	Hyphenation
%
\hyphenation{Shan-mu-ga-lin-gam Hei-no-nen
Kin-nu-nen Lip-schitz non-smooth non-empty non-zero
Rie-mann-ian}
%
%       Special defined symbols
%
\newcommand{\setm}{\setminus}
\renewcommand{\emptyset}{\varnothing}
%
% \vint barred integral
% Definition submitted by Tero Kilpel\"ainen and Pekka Koskela
% with articles for Arkiv f\"or matematik 37:2 (1999).
%
% Optimized for Knuth's Computer Modern fonts 10-12pt,
% but should work well in most situations (with a symmetric
% integral sign). For some reason it needs to be slightly asymmetric
% in displaystyle (hence 0.22 instead of 0.2).
%
% \vintslides is provided for use with the slides style,
% where a thicker rule is needed.
%
% Modified in various ways by Anders Bj\"orn since 2000.
% Major modification by Anders Bj\"orn, 28 April 2009.
%
\def\vint{\mathop{\mathchoice%
          {\setbox0\hbox{$\displaystyle\intop$}\kern 0.22\wd0%
           \vcenter{\hrule width 0.6\wd0}\kern -0.82\wd0}%
          {\setbox0\hbox{$\textstyle\intop$}\kern 0.2\wd0%
           \vcenter{\hrule width 0.6\wd0}\kern -0.8\wd0}%
          {\setbox0\hbox{$\scriptstyle\intop$}\kern 0.2\wd0%
           \vcenter{\hrule width 0.6\wd0}\kern -0.8\wd0}%
          {\setbox0\hbox{$\scriptscriptstyle\intop$}\kern 0.2\wd0%
           \vcenter{\hrule width 0.6\wd0}\kern -0.8\wd0}}%
          \mathopen{}\int}
%
%       log-like symbols and standard notation
%
\DeclareMathOperator{\capp}{cap}
\newcommand{\cp}{\capp_p}
\newcommand{\binfty}{{\boldsymbol{\infty}}}
\newcommand{\grad}{\nabla}
\newcommand{\lQ}{\itunderline{Q}}
\newcommand{\uQ}{\itoverline{Q}}
\newcommand{\lQo}{\itunderline{Q}_0}
\newcommand{\uQo}{\itoverline{Q}_0}
\newcommand{\wt}{\widetilde{w}}

\DeclareMathOperator{\Lip}{Lip}
\newcommand{\Lipc}{{\Lip_c}}
\newcommand{\Liploc}{{\Lip\loc}}
\DeclareMathOperator{\diam}{diam}
\DeclareMathOperator{\dvg}{div}
\newcommand{\bdry}{\partial}
\newcommand{\bdy}{\bdry}
\newcommand{\loc}{_{\rm loc}}

{\catcode`p =12 \catcode`t =12 \gdef\eeaa#1pt{#1}}      % Get slantfactor
\def\accentadjtext#1{\setbox0\hbox{$#1$}\kern   % Convert it with height
                \expandafter\eeaa\the\fontdimen1\textfont1 \ht0 }
\def\accentadjscript#1{\setbox0\hbox{$#1$}\kern % Convert it with height
                \expandafter\eeaa\the\fontdimen1\scriptfont1 \ht0 }
\def\accentadjscriptscript#1{\setbox0\hbox{$#1$}\kern   % Convert it with height
                \expandafter\eeaa\the\fontdimen1\scriptscriptfont1 \ht0 }
\def\accentadjtextback#1{\setbox0\hbox{$#1$}\kern       % Convert it with height
                -\expandafter\eeaa\the\fontdimen1\textfont1 \ht0 }
\def\accentadjscriptback#1{\setbox0\hbox{$#1$}\kern     % Convert it with height
                -\expandafter\eeaa\the\fontdimen1\scriptfont1 \ht0 }
\def\accentadjscriptscriptback#1{\setbox0\hbox{$#1$}\kern % Convert it with height
                -\expandafter\eeaa\the\fontdimen1\scriptscriptfont1 \ht0 }
\def\itoverline#1{{\mathsurround0pt\mathchoice
        {\rlap{$\accentadjtext{\displaystyle #1}
                \accentadjtext{\vrule height1.593pt}
                \overline{\phantom{\displaystyle #1}
                \accentadjtextback{\displaystyle #1}}$}{#1}}
        {\rlap{$\accentadjtext{\textstyle #1}
                \accentadjtext{\vrule height1.593pt}
                \overline{\phantom{\textstyle #1}
                \accentadjtextback{\textstyle #1}}$}{#1}}
        {\rlap{$\accentadjscript{\scriptstyle #1}
                \accentadjscript{\vrule height1.593pt}
                \overline{\phantom{\scriptstyle #1}
                \accentadjscriptback{\scriptstyle #1}}$}{#1}}
        {\rlap{$\accentadjscriptscript{\scriptscriptstyle #1}
                \accentadjscriptscript{\vrule height1.593pt}
                \overline{\phantom{\scriptscriptstyle #1}
                \accentadjscriptscriptback{\scriptscriptstyle #1}}$}{#1}}}}
\def\itunderline#1{{\mathsurround0pt\mathchoice
        {\rlap{$\underline{\phantom{\displaystyle #1}
                \accentadjtextback{\displaystyle #1}}$}{#1}}
        {\rlap{$\underline{\phantom{\textstyle #1}
                \accentadjtextback{\textstyle #1}}$}{#1}}
        {\rlap{$\underline{\phantom{\scriptstyle #1}
                \accentadjscriptback{\scriptstyle #1}}$}{#1}}
        {\rlap{$\underline{\phantom{\scriptscriptstyle #1}
                \accentadjscriptscriptback{\scriptscriptstyle #1}}$}{#1}}}}
% Here we have omitted definitions of \itoverlineb etc.
%
%       Greek alphabet and abreviations
%
\newcommand{\al}{\alpha}
\newcommand{\alp}{\alpha}
\newcommand{\be}{\beta}
\newcommand{\clB}{\itoverline{B}}

\newcommand{\ga}{\gamma}
\newcommand{\de}{\delta}
\newcommand{\eps}{\varepsilon}
\newcommand{\la}{\lambda}
\newcommand{\Om}{\Omega}
\newcommand{\om}{\omega}
\renewcommand{\phi}{\varphi}
% \p for better spacing in constructions like p-something
\newcommand{\p}{{$p\mspace{1mu}$}}
\newcommand{\R}{\mathbf{R}}
\newcommand{\Rn}{\R^n}
\newcommand{\Sp}{\mathbf{S}}
\newcommand{\Spn}{\mathbf{S}^{n-1}}
\newcommand{\Spe}{\mathbf{S}^{1}}
\newcommand{\eR}{{\overline{\R}}}
%
%       Spaces
%
\newcommand{\Np}{N^{1,p}}
\newcommand{\Nploc}{N^{1,p}\loc}
\newcommand{\Lploc}{L^p\loc}
\newcommand{\Ga}{\Gamma}
\newcommand{\ut}{{\tilde{u}}}
\newcommand{\ub}{{\bar{u}}}
\newcommand{\clG}{\itoverline{G}}
\newcommand{\CPI}{C_{\rm PI}}
\newcommand{\Cp}{{C_p}}
\newcommand{\Cpw}{{C_{p,w}}}

\newcommand{\simge}{\gtrsim}
\newcommand{\simle}{\lesssim}

\newcommand{\chione}{\chi}
\newcommand{\Xstar}{X^*}
\newcommand{\bdystar}{\partial^*}
\newcommand{\UU}{\mathcal{U}}
\newcommand{\uP}{\itoverline{P}}     
\newcommand{\lP}{\itunderline{P}} 
\newcommand{\gadot}{\dot{\ga}}

%
% Here we make the equation numbering to be within sections
%
\numberwithin{equation}{section}
\newenvironment{ack}{\medskip{\it Acknowledgement.}}{}

\begin{document}

\authortitle{Anders Bj\"orn, Jana Bj\"orn, Sylvester Eriksson-Bique and Xiaodan Zhou}
{Uniqueness and nonuniqueness of \p-harmonic Green functions on 
weighted $\R^n$ and metric spaces}
{Uniqueness and nonuniqueness of \p-harmonic Green functions}

 \author{
 Anders Bj\"orn \\
 \it\small Department of Mathematics, Link\"oping University, SE-581 83 Link\"oping, Sweden\\
 \it\small and Theoretical Sciences Visiting Program, Okinawa Institute of\\
 \it\small
  Science and Technology Graduate University, Onna, 904-0495, Japan \\
 \it \small anders.bjorn@liu.se, ORCID\/\textup{:} 0000-0002-9677-8321
 \\  \\
 Jana Bj\"orn \\
 \it\small Department of Mathematics, Link\"oping University, SE-581 83 Link\"oping, Sweden\\
 \it\small and Theoretical Sciences Visiting Program, Okinawa Institute of\\
 \it\small
  Science and Technology Graduate University, Onna, 904-0495, Japan \\
 \it \small jana.bjorn@liu.se, ORCID\/\textup{:} 0000-0002-1238-6751
 \\  \\
Sylvester Eriksson-Bique \\
\it\small Department of Mathematics and Statistics, University of Jyv\"askyl\"a,\\
\it\small P.O. Box 35 (MaD), FI-40014 University of Jyv\"askyl\"a, Finland \\
 \it\small and Theoretical Sciences Visiting Program, Okinawa Institute of\\
 \it\small
  Science and Technology Graduate University, Onna, 904-0495, Japan \\
\it \small  sylvester.d.eriksson-bique@jyu.fi  ORCID\/\textup{:} 0000-0002-1919-6475
\\ \\
Xiaodan Zhou \\
\it \small Analysis on Metric Spaces Unit\\
\it \small
Okinawa Institute of
  Science and Technology Graduate University,  \\
\it \small Onna, 904-0495, Japan \\
\it \small xiaodan.zhou@oist.jp  ORCID\/\textup{:} 0000-0003-2169-7524
 }

%\author{Anders Bj\"orn, Jana Bj\"orn, Sylvester Eriksson-Bique, Xiaodan Zhou}

\date{Preliminary version, \today}
\date{} 

\maketitle

\noindent{\small {\bf Abstract.} 
We study uniqueness
of \p-harmonic Green functions in 
domains $\Om$ in 
a complete metric space equipped with a doubling measure
supporting a \p-Poincar\'e inequality, 
with $1<p<\infty$.
For bounded domains in unweighted $\Rn$,
the uniqueness was shown for the \p-Laplace operator $\Delta_p$ and all $p$ by 
Kichenassamy--V\'eron (\emph{Math.\ Ann.} {\bf 275} (1986), 599--615),
while for $p=2$ it is an easy consequence of the linearity of the Laplace
operator~$\Delta$.
Beyond that, uniqueness is only known in some particular cases, such as in
Ahlfors \p-regular spaces, as shown by 
Bonk--Capogna--Zhou ({\tt arXiv:2211.11974}).
When the singularity $x_0$ has positive \p-capacity,
the Green function is a particular multiple 
of the capacitary potential for $\cp(\{x_0\},\Om)$ and is therefore unique.
Here we give a sufficient condition for uniqueness in metric spaces, and provide
an example showing that the range of $p$ for which it holds
(while $x_0$ has zero \p-capacity) can be a nondegenerate interval.
In the opposite direction, we give the first example
showing that uniqueness can fail in metric spaces, even for $p=2$.
}

\medskip
\noindent
{\small \emph{Key words and phrases}:
doubling measure,
metric space,
nonuniqueness,
\p-harmonic Green function,
Poincar\'e inequality,
weighted Euclidean space,
uniqueness.
}

\medskip
\noindent
{\small \emph{Mathematics Subject Classification} (2020): 
Primary: 
35J08, %Green's functions for elliptic equations
Secondary:  
30L99, %Analysis on metric spaces; None of the above, but in this section
31C45; %Potential theory; Other generalizations (nonlinear potential theory, etc.)
31E05, %Potential theory on fractals and metric spaces
35J92, %Quasilinear elliptic equations with \p-Laplacian
49Q20. %Variational problems in a geometric measure-theoretic setting
%30L99, 31C15, 35J92
}

%% \komB{For submissions we can add declarations like this.
%% I would skip that for arXiv.}

%% \medskip
%% \noindent
%%     {\small {\bf Declarations}. \\
%%       \emph{Funding}:
%% A.~B. and J.~B. were supported by the Swedish Research Council,
%%   grants 2020-04011 and 2022-04048, respectively. 
%%   S.~\mbox{E.-B.}
%% was supported in part by the Research Council of Finland grant 354241. 
%% X.~Z. was supported by JSPS Grant-in-Aid for Early-Career Scientists 
%% No.~22K13947 and JSPS Grant-in-Aid for Scientific Research(C) No.~25K00211.
%%      \\
%% \emph{Conflicts of interest}:   None.
%% \\
%% \emph{Availability of data and material}:   Not applicable.
%% \\
%% \emph{Code availability}:   Not applicable.
%%     } 

\section{Introduction}

In the Euclidean space $\R^n$, a function $u$ is said to be 
the (\p-harmonic) Green function for a bounded domain $\Omega\subset \R^n$ with singularity at $x_0\in\Om$
if it is the weak solution of the equation 
\begin{equation}\label{p-Lap}
-  \Delta_p u  = \de_{x_0}  \quad \text{in $\Omega$}
\end{equation}
with the Dirac measure $\de_{x_0}$  in the right-hand side and zero boundary values on $\bdy\Om$
in a weak sense.
Here
\[
\Delta_p u  := \text{div}(|\nabla u|^{p-2} \nabla u),  \quad 1<p<\infty,
\]
is the \p-Laplace operator. 
The Green function is \p-harmonic outside of its singularity,
that is, it satisfies the \p-Laplace equation $\Delta_p u  =0$.

In metric spaces, Holopainen--Shan\-mu\-ga\-lin\-gam~\cite{HS02} introduced 
a notion of singular functions which extended the notion of Green functions from 
$\R^n$ and Riemannian manifolds to more general settings. 
More precise definitions and various characterizations of Green functions in metric spaces 
were later given in Bj\"orn--Bj\"orn--Lehrb\"ack~\cite{BBLgreen}
and Bj\"orn--Bj\"orn~\cite{BBglobal}, 
see Definitions~\ref{deff-sing-bdd} and~\ref{deff-sing-unbdd} below.
For  domains in complete metric spaces $X=(X,d)$, 
equipped with a doubling measure $\mu$ supporting a \p-Poincar\'e inequality
with $1<p< \infty$, 
it was shown  in~\cite{BBglobal}  and~\cite{BBLgreen} that 
Green functions exist in a domain $\Om$
if and only if the \p-capacity $\Cp(X \setm \Om)>0$
or $X$ is \p-hyperbolic.

Roughly speaking, by \cite[Theorem~8.5]{BBLgreen}, $u:\Om\to(0,\infty]$ is
a (\p-harmonic) \emph{Green function}   with singularity at $x_0\in\Om$ for a (bounded) domain $\Om$ 
in a metric space $X$ if it is \p-harmonic in 
$\Om\setm\{x_0\}$, has zero boundary values on $\bdy \Om$
in a suitable sense and is properly normalized 
so that
 \begin{equation*}
  \cp(\{x: u(x) \ge b\},\Om) = b^{1-p},
 \quad \text{when }
    0  <b < u(x_0)=\lim_{x\to x_0} u(x).
 \end{equation*}
This normalization captures in a quantitative way the right-hand side $\de_{x_0}$ in~\eqref{p-Lap}.

In this paper we study whether such Green functions in metric spaces are unique.
Our first result answers this question in the affirmative
in spaces satisfying a critical volume growth condition. 
Here $B_r=B(x_0,r)=\{y \in X : d(y,x_0)<r\}$.

\begin{thm}   \label{thm-unique-Green-intro}
Let $X$ be a complete metric space and assume that $\mu$ is doubling 
and supports a \p-Poincar\'e inequality on $X$, where $1<p<\infty$.
Assume that 
\begin{equation}  \label{eq-ass-for-unique-limsup}
\limsup_{r \to 0} \biggl( \frac{\mu(B_{r})}{r^p} \biggr)^{1/(p-1)}  
\int_{r}^1 \biggl( \frac{\rho^p}{\mu(B_\rho)} \biggr)^{1/(p-1)} \frac{d\rho}{\rho} 
= \infty.
\end{equation}

If $u$ and $v$ are\/ \textup(\p-harmonic\/\textup) Green functions in a  domain $\Om$ with singularity at~$x_0\in\Om$,
then $u=v$, i.e.\ the Green function in $\Om$ with singularity at $x_0$ is unique.
\end{thm}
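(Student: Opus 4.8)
The plan is to argue by comparison, reducing the global equality $u=v$ to the behaviour of the ratio $u/v$ at the singularity, and then to exploit the critical growth hypothesis~\eqref{eq-ass-for-unique-limsup} to pin that ratio down. First I would dispose of the easy case: if $\cp(\{x_0\},\Om)>0$ then, as recalled in the introduction, every Green function is the \emph{same} fixed multiple of the capacitary potential for $\cp(\{x_0\},\Om)$, so $u=v$ automatically. Hence I assume $\cp(\{x_0\},\Om)=0$, whence $u(x_0)=v(x_0)=\infty$ and both functions blow up at $x_0$. Writing $B_r=B(x_0,r)$ and $\psi(r)=\int_r^1(\rho^p/\mu(B_\rho))^{1/(p-1)}\,d\rho/\rho$, the capacity estimates for Green functions together with Harnack's inequality (see \cite{BBLgreen}) give $u\asymp\psi(d(\cdot,x_0))\asymp v$ in a punctured neighbourhood of $x_0$; in particular $u/v$ is bounded there.

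Next I would localize. Fix a small $\rho>0$. On $\Om\setm\clB_\rho$ both $u$ and $v$ are \p-harmonic and vanish on $\bdy\Om$, so for any constant $\lambda$ the comparison principle shows that $u\le\lambda v$ holds throughout $\Om\setm\clB_\rho$ as soon as it holds on $\bdy B_\rho$ (the part $\bdy\Om$ being harmless since both sides vanish there). Consequently
\[
 \sup_{\Om\setm\clB_\rho}\frac uv=\sup_{\bdy B_\rho}\frac uv,
\]
and letting $\rho\to0$ gives $\lambda^*:=\sup_\Om(u/v)=\limsup_{x\to x_0}u(x)/v(x)$, which is finite by the previous paragraph. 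Thus it suffices to prove $\limsup_{x\to x_0}u/v\le1$: the symmetric statement for $v/u$ then yields $u\le v\le u$, i.e.\ $u=v$.

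The heart of the matter is therefore to show $\limsup_{x\to x_0}u/v\le1$, and this is where~\eqref{eq-ass-for-unique-limsup} enters. Since $(\mu(B_r)/r^p)^{1/(p-1)}=1/(-r\psi'(r))$ and, by doubling, $\psi(r)-\psi(2r)\asymp-r\psi'(r)$, the hypothesis is equivalent to the existence of a sequence $r_k\to0$ of ``good scales'' along which $\psi$ is slowly varying, $\psi(2r_k)/\psi(r_k)\to1$ (equivalently $\psi(\tau r_k)/\psi(r_k)\to1$ for each fixed $\tau\in(0,1]$). The plan is to show that along such scales the common unit-charge normalization forces $\sup_{\bdy B_{r_k}}(u/v)\to1$; since $\sup_{\bdy B_\rho}(u/v)\to\lambda^*$ as $\rho\to0$, this gives $\lambda^*=1$. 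To see why the normalization should help, recall that $\min(u,b)/b$ is the capacitary potential of $(\{u\ge b\},\Om)$ with energy $\cp(\{u\ge b\},\Om)=b^{1-p}$; comparing the super-level sets with the balls $\clB_{r_k}$ shows that on $\bdy B_{r_k}$ both $u$ and $v$ agree, up to Harnack factors, with the purely geometric quantity $\cp(\clB_{r_k},\Om)^{1/(1-p)}\asymp\psi(r_k)$, which is identical for the two functions.

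The main obstacle is precisely to upgrade this Harnack-level comparability, which only traps $u/v$ between two constants, to the sharp conclusion $u/v\to1$. The difficulty is that, in a general doubling Poincar\'e space, $u$ and $v$ may genuinely oscillate on each sphere $\bdy B_r$, and the Harnack constant governing that oscillation does \emph{not} improve as $r\to0$, so the limit cannot be read off from size estimates alone. I expect this step to require a quantitative oscillation-decay (boundary-Harnack-type) estimate for the ratio of two positive \p-harmonic functions at the isolated singularity: the slow variation of $\psi$ at the good scales makes the radial increment of each Green function across a thick annulus $A(\tau r_k,r_k)$ negligible compared with its size $\psi(r_k)\to\infty$, so that the ``charge-zero'' discrepancy between $u$ and $v$ is of strictly lower order than $\psi$; telescoping this estimate over the good scales should then drive the oscillation of $u/v$ to zero, and the equal normalization fixes the common limit at exactly~$1$. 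This is the delicate point, and it is exactly the place where the hypothesis must fail in the accompanying nonuniqueness example.
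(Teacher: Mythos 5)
Your overall architecture is the same as the paper's: dispose of the case $\Cp(\{x_0\})>0$ via the capacitary potential, reduce uniqueness to showing $\lim_{x\to x_0}u/v=1$ by comparison away from the singularity, and obtain that limit by comparing both Green functions with the common geometric quantity $\cp(B_r,B_R)^{1/(1-p)}$ along "good scales" $r_k\to0$ where $\psi(r):=\int_r^1(\rho^p/\mu(B_\rho))^{1/(p-1)}\,d\rho/\rho$ varies slowly; your reformulation of \eqref{eq-ass-for-unique-limsup} as $\psi(2r_k)/\psi(r_k)\to1$ is exactly right. But there is a genuine gap precisely where you flag it: you never prove the oscillation estimate that upgrades Harnack-level comparability to $\sup_{\bdy B_{r_k}}(u/v)\to1$, and the device you propose for it --- a boundary-Harnack-type estimate for the ratio of two positive \p-harmonic functions, telescoped over scales --- is not available at this level of generality and is not what the paper uses. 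The paper closes the gap working with \emph{one} Green function at a time, with no ratio estimate at all: by the normalization \eqref{eq-normalized-Green-intro}, the energy of $u$ in a slab $\{m'<u<M'\}$ equals exactly $M'-m'$; feeding the truncation $\min\{M',\max\{m',u\}\}$ into the \p-Poincar\'e inequality on $4B$ gives $M'-m'\lesssim r\,\mu(B)^{-1/p}(M'-m')^{1/p}$, hence the full oscillation of $u$ over the annulus $3B\setm 2B$ is $\lesssim(\mu(B)/r^p)^{1/(1-p)}$ (Lemma~\ref{lem-est-M-m-2B}, which also needs the iterated Harnack inequality of Proposition~\ref{prop:Harnack}). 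Combining this with the sandwich $m(r)-M(R)\le\cp(B_r,B_R)^{1/(1-p)}\le M(r)-m(R)$, coming from \eqref{eq-normalized-Green-intro} and the nesting of level sets between balls, and with the lower bound $M(r)\gtrsim\psi(r)$ from \cite{BBLehIntGreen}, hypothesis \eqref{eq-ass-for-unique-limsup} forces $(M(r_j)-m(r_j))/M(r_j)\to0$ along the good sequence (Lemma~\ref{lem-unique-Green}); this is exactly the step your proposal leaves as an expectation. Note the mechanism: the normalization is used \emph{quantitatively inside the Poincar\'e inequality}, not merely to identify the size $\psi(r_k)$; nothing about the Harnack constant needs to improve as $r\to0$.

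A secondary, fixable imprecision: in your localization step you treat the zero boundary values on $\bdy\Om$ as if attained pointwise (``both sides vanish there''), but for an arbitrary domain --- possibly irregular, possibly unbounded --- they are attained only in the weak sense built into the definition of singular functions. The correct mechanism is property \ref{uu-P}: $u=P_{\Om\setm\clB_r}\ut$, together with monotonicity and positive homogeneity of Perron solutions, which is how the paper passes from $u\le(1+\eps)v$ near $x_0$ to $u\le(1+\eps)v$ in all of $\Om\setm\clB_r$; this also handles the boundary point $\binfty$ when $\Om$ is unbounded, where a naive comparison would fail in \p-parabolic spaces (cf.\ Proposition~\ref{prop-parabolic}).
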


\begin{remark}\label{mainthm-remark-intro}
When $\Cp(\{x_0\})>0$, the Green function is a particular multiple 
of the capacitary potential for $\cp(\{x_0\},\Om)$ and is therefore unique, see
Bj\"orn--Bj\"orn--Lehrb\"ack~\cite[Theorem~1.3]{BBLgreen}
(for bounded $\Om$) and
Bj\"orn--Bj\"orn~\cite[Corollary~10.3]{BBglobal}
(for general $\Om$).   

Theorem~\ref{thm-unique-Green-intro}
provides a new sufficient condition \eqref{eq-ass-for-unique-limsup} for the uniqueness of the Green function in the case $\Cp(\{x_0\})=0$. When the space $X$ is Ahlfors $Q$-regular and $p=Q$,  \eqref{eq-ass-for-unique-limsup} is satisfied and our result recovers the uniqueness of Green functions on a bounded regular
domain $\Omega\subset X$ in Bonk--Capogna--Zhou~\cite[Theorem~1.1(ii) and Remark~1.1]{BCZ}.

More generally, condition \eqref{eq-ass-for-unique-limsup} holds if $p>q_0$, where
\[
q_0 := \sup \biggl\{   q>0: \frac{\mu(B_{r})}{\mu(B_{R})} \simle 
\Bigl( \frac{r}{R} \Bigr)^q  \text{ for all }
0<r<R\le1 \biggr\},
\] 
see 
Proposition~\ref{prop-uQ-lQ}\ref{k-a}.
Example~\ref{ex-abcd-alt} shows that
the range of new exponents $p$
covered by Theorem~\ref{thm-unique-Green-intro}
can be a nondegenerate arbitrarily large interval.

\end{remark}

At the same time, in Example~\ref{ex-Finsler-1} we  prove the following nonuniqueness result
with a \p-admissible weight in~$\R^2$.

\begin{theorem}      \label{thm-nonunique}
Let $1<p<\infty$ and either $\Om=\{x\in \R^2: |x|<R\}$   
or $\Om=\R^2$.

Then there is a metric $d$ on $\R^2$,  biLipschitz equivalent to the Euclidean metric,
and a doubling 
measure $d\mu= w\,dx$ supporting a \p-Poincar\'e inequality,
such that for the metric space $X:=(\R^2,d,\mu)$
there are uncountably many\/ \textup(\p-harmonic\/\textup)
Green functions in $\Om$ with a singularity  at $0$,
that is, the Green functions in $\Om$ are not unique.

The metric $d$ is independent of $p$ and $\Om$.
If $1<p<2$ then we can choose $w\equiv 1$.
\end{theorem}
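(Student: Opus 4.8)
The plan is to build the space by hand and then exploit the non-strict convexity of a crystalline (Finsler) norm to create a functional degree of freedom in the \p-harmonic equation. For the metric I would fix, once and for all and independently of $p$, the norm on $\R^2$ whose unit ball is a square, i.e.\ $d(x,y)=\|x-y\|_\infty$, which is biLipschitz to the Euclidean metric; its dual norm is $\|\xi\|_*=|\xi_1|+|\xi_2|$. For the measure I would take the power weight $d\mu=|x|^{\alpha}\,dx$ and choose $\alpha$ so that $\mu(B_r)\simeq r^{\beta}$ with $p<\beta<2p$, which forces $\alpha\in(p-2,2p-2)$. In that range $w=|x|^\alpha$ is a \p-admissible (Muckenhoupt) weight, so $\mu$ is doubling and supports a \p-Poincar\'e inequality (transferred from the Euclidean case by the biLipschitz equivalence); at the same time $\beta>p$ makes $\Cp(\{0\})=0$ and, when $\Om=\R^2$, renders $X$ \p-hyperbolic, so Green functions exist at all. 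Since $\beta>p$ is exactly where the uniqueness criterion \eqref{eq-ass-for-unique-limsup} fails, this is the regime not excluded by Theorem~\ref{thm-unique-Green-intro}. When $1<p<2$ the choice $\beta=2$, i.e.\ $w\equiv1$, is admissible, giving the last sentence of the theorem.

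The key mechanism is that in a normed space the minimal \p-weak upper gradient of a smooth $u$ equals $\|\grad u\|_*$, so the \p-energy is the anisotropic integral $\int\|\grad u\|_*^p\,d\mu=\int(|u_{x_1}|+|u_{x_2}|)^p\,d\mu$. Because $\xi\mapsto\|\xi\|_*^p$ is convex but \emph{not} strictly convex, the Euler--Lagrange equation degenerates. In the unweighted model case, on the sector $\{u_{x_1}>0,\ u_{x_2}>0\}$ it reads $(\partial_{x_1}+\partial_{x_2})\bigl[(u_{x_1}+u_{x_2})^{p-1}\bigr]=0$; passing to $\xi=x_1+x_2,\ \eta=x_1-x_2$ this becomes $\partial_\xi\bigl[(u_\xi)^{p-1}\bigr]=0$, whose general solution is $u(x)=g(\eta)\,\xi+h(\eta)$ for \emph{arbitrary} $g>0$ and $h$. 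Thus the equation carries infinite functional freedom on each such sector, and this is precisely what makes the notion genuinely nonlinear even when $p=2$, removing the linear obstruction to nonuniqueness. To keep the gluing tractable I would actually prefer a norm that is strictly convex except for a single pair of antipodal corners, so that its dual --- hence the energy density --- degenerates only along one range of gradient directions, confining the freedom to one sector while pinning the solution to a standard \p-harmonic profile elsewhere.

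With this freedom in hand the family is built as follows. The normalization $\cp(\{u\ge b\},\Om)=b^{1-p}$ fixes the \emph{total} singular mass at $0$, but the Finsler degeneracy lets that mass be distributed among directions in uncountably many ways: concretely, blow-up of $u=g(\eta)\xi+h(\eta)$ as $x\to0$ forces $h$ to be singular along the ray $\eta=0$ through the origin bounding the sector, and the profile of that singularity encodes a variable \emph{angular shape} of the singularity. Starting from the symmetric radial base Green function $u_0$ supplied by \cite{BBLgreen} and \cite{BBglobal}, I would, for each parameter $t$ in a nondegenerate interval, alter the free functions $g,h$ inside the degenerate sector so that the resulting $u_t$ still blows up at $0$, still vanishes on $\bdy\Om$ (and decays correctly at infinity when $\Om=\R^2$), but has a different level-set geometry near $0$. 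After the unique rescaling fixing the normalization --- equivalently, making the anisotropic \p-flux through $\{u_t=b\}$ constant in $b$ and equal to the required value --- distinct parameters give distinct normalized functions, so the family is uncountable.

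The main obstacle is this last gluing-and-normalization step. First I must check that the sectorwise solutions match to a genuine \p-harmonic function across the ridges where $\grad u$ becomes axis-aligned and the first-order equation changes character (the roles of $\xi$ and $\eta$ interchange between adjacent sectors); this requires continuity of $u$ together with the correct matching of the anisotropic flux-current, and, crucially, confirmation that the glued function is a weak minimizer and not merely a pointwise solution on each piece. Second I must verify the capacity normalization quantitatively, computing $\cp(\{u_t\ge b\},\Om)$ and showing it equals $b^{1-p}$; this is where the weight $|x|^\alpha$ and the exponent $\beta\in(p,2p)$ enter, and where the constant-flux reformulation has to be justified directly from Definitions~\ref{deff-sing-bdd} and~\ref{deff-sing-unbdd}. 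Once these two points are settled, the uncountable family $\{u_t\}$ witnesses the failure of uniqueness for every $1<p<\infty$, with the simplification $w\equiv1$ in the range $1<p<2$.
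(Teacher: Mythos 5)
Your overall philosophy --- a non-strictly convex (crystalline) Finsler structure combined with a power weight $|x|^\alpha$, with $\alpha$ chosen so that $\Cp(\{0\})=0$ and the space is \p-hyperbolic --- is exactly the philosophy of the paper's Example~\ref{ex-Finsler-1}, and your reduction of the Euler--Lagrange equation to a first-order equation on sectors is correct as far as it goes. But the concrete metric you fix does not work, and the failure is structural rather than technical. You take the translation-invariant norm $d(x,y)=\|x-y\|_\infty$, whose dual $\ell^1$-energy has \emph{fixed} flat directions; the general solution on a sector, $u=g(\eta)\xi+h(\eta)$, is affine in $\xi$ along every line parallel to a fixed diagonal. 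As you yourself observe, blow-up at the origin then forces $h(\eta)\to\infty$ as $\eta\to0$, which makes $u$ blow up along the \emph{entire} ray $\{\eta=0\}$ inside the sector, not just at $0$. This contradicts condition~\ref{dd-h} of Definition~\ref{deff-sing-bdd}: a singular function must be \p-harmonic, hence finite and continuous, in $\Om\setm\{0\}$. So no function produced by your mechanism can have an isolated singularity, and the construction cannot get started; note also that you never actually exhibit two distinct Green functions for your space --- existence alone follows from Theorem~\ref{thm-Green}\ref{item-Green-exist}, but for your Cartesian $\ell^\infty$ metric uniqueness might well hold.

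The paper's fix is precisely to make the flat directions of the norm rotate with the base point: the metric is defined by the $\ell^1$-norm in the moving polar frame $\{\partial_r,\tfrac1r\partial_\theta\}$, so that $g_u=\max\{|\partial_r u|,|\partial_\theta u|/r\}$. Then for every $1$-Lipschitz $f$ on $\Spe$ the separable function $u(re^{i\theta})=(r^{-a_p}-R^{-a_p})e^{a_pf(\theta)}$, with $a_p=(2+\alp-p)/(p-1)$, satisfies $|\partial_\theta u|/r\le|\partial_r u|$, hence $g_u=|\partial_r u|$, and \p-harmonicity in $\Om\setm\{0\}$ is verified by comparing the energy of $u+\phi$ with the one-dimensional \p-harmonic radial profile ray by ray --- no gluing across ridges and no weak-versus-pointwise issue arises, because minimality is checked directly from the definition. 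The singularity is genuinely isolated at $0$, and uncountably many choices of $f$ give uncountably many singular functions. Your final worry about the capacity normalization also dissolves: by Theorem~\ref{thm-Green}\ref{item-Green-a}, every singular function has a unique positive multiple that is a Green function, so no computation of $\cp(\{u\ge b\},\Om)$ is needed. If you replace your Cartesian $\ell^\infty$ metric by this polar $\ell^1$ structure, your outline essentially becomes the paper's proof; as it stands, the key step fails.
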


For domains in $\R^n$, equipped with the Euclidean metric and the Lebesgue measure, 
Green functions are known to be unique.
For $p=2$, this is an easy consequence of the linearity of the Laplace operator and the maximum 
principle for harmonic functions.
When $p\ne 2$, the \p-Laplace operator is nonlinear and the above argument does not apply.
In this case, uniqueness was proved only in 1986 by 
Kichenassamy--V\'eron~\cite[Theorem~2.1]{KichVeron}. 

Beyond that, uniqueness has been shown in the following cases:
 for regular relatively compact domains in $n$-dimensional
Riemannian manifolds (and $p=n$) by Holopainen~\cite[Theorem~3.22]{Ho},
and more
recently for bounded regular domains in Ahlfors $Q$-regular metric spaces (and  $p=Q$)
in Bonk--Capogna--Zhou~\cite[Theorem~1.1]{BCZ}.
The uniqueness also holds when $\Cp(\{x_0\})>0$, by 
\cite[Corollary~10.3]{BBglobal} and \cite[Theorem~1.3]{BBLgreen},
since the Green function is then
just a suitable multiple of the capacitary potential for $\{x_0\}$ in $\Om$.

The \emph{\p-harmonic functions}
in a general metric measure space $(X,d,\mu)$ are defined as 
continuous local minimizers   of the \p-energy integral 
\begin{equation}  \label{eq-p-energy}
\int_\Om g_u^p\,d\mu,
\end{equation}
where $g_u$ is the minimal \p-weak upper gradient of $u$,
see Sections~\ref{sect-ug} and~\ref{sect-pharm}.
This is a natural generalization of the fact that the \p-Laplace equation $\Delta_p u=0$ in $\R^n$ is the Euler--Lagrange
equation for minimizers of the \p-energy $\int_\Om |\grad u|^p\,dx$.

If $X$ is equipped with a smooth differentiable structure, such as for
Riemannian manifolds, then one can derive 
an Euler--Lagrange equation which characterizes \p-harmonic functions.
In particular, if $\R^n$ is equipped with a \p-admissible weight~$w$, 
as in Heinonen--Kilpel\"ainen--Martio~\cite{HeKiMa},
then \p-harmonic functions are local minimizers of the weighted \p-energy integral
\[
\int_\Om |\grad u(x)|^p w(x) \,dx
\]
and solve the weighted \p-Laplace equation 
\[
\dvg (w(x) |\grad u|^{p-2} \grad u ) =0.  
\]
In general metric measure spaces, such a description is not always possible -- 
see however Gigli--Mondino~\cite{GigMon}. 

A seminal result of Cheeger~\cite{Che99} states that every 
complete metric measure space $(X,d,\mu)$,
with $\mu$ doubling and supporting a \p-Poincar\'e inequality, can
be equipped with a  \emph{measurable differentiable structure}. 
In such cases, the \p-energy can be modified to yield an Euler--Lagrange-type equation. 
This is obtained via differential calculus, as developed in \cite{Che99}.
Namely, one can define a vector-valued differential $Du$, 
as a measurable section of a measurable tangent bundle, 
whose pointwise Euclidean norm $|Du|$ is comparable to the minimal \p-weak upper gradient  $g_u$ of  $u$.
For such a choice of a fixed measurable differentiable structure, 
functions minimizing the Cheeger \p-energy 
\[
\int_\Om |Du|^p \,d\mu
\]
satisfy the equation
\begin{equation}   \label{pde-eq}
\int_\Om |Du|^{p-2}Du\cdot D\phi =0  \quad \text{for all  functions $\phi\in \Lip_0(\Om)$}
\end{equation}
and their continuous representatives  are called \emph{Cheeger \p-harmonic functions}. 

If $X$ is infinitesimally Hilbertian, as in e.g.~\cite{GigMon}, the inner product in~\eqref{pde-eq} 
can be chosen in such a way that 
\[
|Du|^2 := Du \cdot Du =g_u^2 
\] 
at almost every point,  see also \cite[Theorem~6.1 and Corollary~4.41]{Che99}. 
In this case, Cheeger \p-harmonic and (upper gradient) \p-harmonic functions 
coincide, see  \cite[Theorem~4.2]{GigMon}. 
This case occurs for example in all analytically 
one-dimensional cases, such as the Laakso spaces 
considered in~\cite{La00}, see also 
Cheeger--Kleiner~\cite[Theorem~9.1]{ChKl}.

On the other hand, in Theorem~\ref{thm-nonunique}
we do not have a scalar product, and as a result, 
the (upper gradient) \p-harmonic Green functions    
differ significantly  from the Cheeger \p-harmonic Green functions. 
From the metric perspective, the space $(\R^2,d)$ in Theorem~\ref{thm-nonunique} 
is biLipschitz equivalent to the Euclidean space, 
but from a PDE point of view their Laplacians are very different. 
The metric $d$ 
is defined through an $\ell^1$ norm in the tangent bundle
and is similar to a Finsler structure studied in Finsler geometry 
-- see e.g.\  
\cite{Bao}, \cite{Chern} and \cite{Rademacher} for introductions to this vast field.

The notion of  \p-harmonic functions 
(based on \eqref{eq-p-energy})
only depends on  the metric 
and the measure of the underlying space. 
In contrast, the definition of Cheeger \p-harmonic functions also depends on the choice 
of a measurable differentiable structure. 
In particular, when the space fails to be infinitesimally Hilbertian, 
the upper gradient $g_u$ in \eqref{eq-p-energy}
is only comparable to the $|Du|$ norm and there is no Euler--Lagrange equation for 
the \p-harmonic functions
(based on \eqref{eq-p-energy}), which therefore differ from the 
Cheeger \p-harmonic functions.

Note, however, that any ``positive''
result proved for the energy minimizing \p-harmonic functions
(such as our uniqueness Theorem~\ref{thm-unique-Green-intro})
holds also for the Cheeger \p-harmonic functions. 
Indeed, it suffices to replace $g_u$ by $|Du|$ in all the proofs.
In particular, Theorem~\ref{thm-unique-Green-intro} provides new uniqueness
results for Green functions in weighted $\R^n$ as in Heinonen--Kilpel\"ainen--Martio~\cite{HeKiMa}, 
cf.\ Example~\ref{ex-abcd-alt}.
However, we do not know whether  nonuniqueness as in Theorem~\ref{thm-nonunique}
can be obtained for Cheeger \p-harmonic Green functions.

The examples in Theorem~\ref{thm-nonunique}
also yield counterexamples to other problems. 
The strong comparison principle for the  \p-Laplace operator $\Delta_p$,
$1<p<\infty$, states that 
if $u$ and $v$ are two \p-harmonic functions in a domain $\Om$ satisfying $u\ge v$, 
then either $u> v$ in $\Om$ or $u \equiv v$ in $\Om$. 
When $p=2$, this is a direct consequence of the linearity of the Laplace operator
and the maximum principle for harmonic functions. 
For the nonlinear \p-Laplace operator when $p\ne2$, this argument does not hold.
Using the theory of quasiregular mappings, 
Manfredi~\cite[Theorem~2]{Man88} proved the strong comparison principle   
for planar domains (in unweighted $\R^2$). 
In higher dimensional Euclidean spaces $\R^n$, with $n\ge 3$, 
the strong comparison principle is still an open problem. 
In metric spaces, we have the following negative result.

\begin{cor} \label{cor-nonls}
Let $1<p<\infty$.
Then there is a metric $d$ on  $\R^2$,  biLipschitz equivalent to the Euclidean metric,
and a \p-admissible measure $d\mu= w\,dx$,  
such that the following hold for $X=(\R^2,d,\mu)$:
\begin{enumerate}
\item The strong comparison principle 
fails for \p-harmonic functions in the punctured unit disc $\{x\in\R^2: 0<|x|<1\}$.
\item The theory for $2$-harmonic functions is nonlinear\/\textup:
there are $2$-harmonic functions $u$ and $v$ 
in the punctured unit disc $\{x\in\R^2: 0<|x|<1\}$ such 
that $u+v$ is not $2$-harmonic.
\end{enumerate}
\end{cor}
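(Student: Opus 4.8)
The plan is to deduce both statements directly from Theorem~\ref{thm-nonunique}, applied with $\Om=\{x\in\R^2:|x|<1\}$ (that is, $R=1$), together with the structure of the Green functions produced in Example~\ref{ex-Finsler-1}. Fix the metric $d$ and, for the relevant exponent, the weight $w$ supplied by that theorem. Recall that every Green function with singularity at $0$ is \p-harmonic in the punctured disc $\{x\in\R^2:0<|x|<1\}$, so all the functions we produce lie in the right class; recall also that here $\Cp(\{0\})=0$, since otherwise the Green function would be unique by Remark~\ref{mainthm-remark-intro}.

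For part~(a) I would exploit that the family of Green functions in Example~\ref{ex-Finsler-1} arises by modifying a fixed reference Green function on a sector of the disc while leaving it unchanged on the complementary open set, the modification being possible precisely because the $\ell^1$-type (Finsler) energy has ``flat'' directions. Concretely, I would select from this family a reference Green function $v$ and a second Green function $u$ with $u\ge v$ everywhere, $u=v$ on a nonempty open subset $O$ of the punctured disc, and $u>v$ off $O$. Both $u$ and $v$ are \p-harmonic in the punctured disc and satisfy $u\ge v$, yet $u=v$ at the interior points of $O$ while $u\not\equiv v$. This is exactly a violation of the strong comparison principle, proving~(a).

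For part~(b) I would argue by contradiction, using the $p=2$ instance of Theorem~\ref{thm-nonunique}. The class of $2$-harmonic functions is closed under multiplication by scalars and under negation, since $g_{cu}=|c|g_u$ changes the energy $\int g_u^2\,d\mu$ only by the factor $c^2$. Suppose, towards a contradiction, that this class were also closed under addition. Then for any two Green functions $u$ and $v$ the difference $u-v=u+(-v)$ would again be $2$-harmonic in the punctured disc. Since $u$ and $v$ share the same normalization at $0$, their singular parts agree to leading order, so $u-v$ is bounded near $0$; as $\Cp(\{0\})=0$, the point singularity is removable and $u-v$ extends to a $2$-harmonic function on the whole disc with zero boundary values. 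The maximum principle then forces $u-v\equiv0$, i.e.\ the Green function would be unique --- this is precisely the classical linear argument recalled in the introduction. This contradicts Theorem~\ref{thm-nonunique}, so the class of $2$-harmonic functions is not closed under addition, and there exist $2$-harmonic $u$ and $v$ with $u+v$ not $2$-harmonic.

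The main obstacle is in part~(a): the mere existence of two distinct Green functions is not enough, because two such functions will in general cross, so that neither dominates the other on the whole punctured disc and the hypothesis $u\ge v$ of the strong comparison principle is unavailable. What makes the argument go through is the localized nature of the perturbations in Example~\ref{ex-Finsler-1}, which yields a globally ordered pair coinciding on an open set. For part~(b) the clean route is the nonconstructive contradiction above; producing an explicit pair $u,v$ with $u+v$ not $2$-harmonic would instead require computing the (non-quadratic) Finsler $2$-energy of the candidate sum, which is why I prefer the indirect argument.
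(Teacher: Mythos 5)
Your part~(a) is essentially the paper's own argument. The paper takes $f_1\equiv 0$ and a nonnegative $1$-Lipschitz $f_2$ on $\Spe$ vanishing exactly at $\theta=0$, so that $u_1\le u_2$ in the punctured disc with equality precisely on a ray; your variant (an $f_2\ge 0$ vanishing exactly on an arc, giving coincidence on an open sector) is realized by the same family $u_f(re^{i\theta})=(r^{-a_p}-1)e^{a_p f(\theta)}$ from Example~\ref{ex-Finsler-1} and works for the same reason. One caution: phrase this for the unnormalized functions $u_f$, which Example~\ref{ex-Finsler-1} shows are \p-harmonic in the punctured disc, rather than for the normalized Green functions $A_f u_f$ of Theorem~\ref{thm-Green}\ref{item-Green-a}. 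The constants $A_f$ may depend on $f$, and multiplying the two functions by different constants can destroy both the ordering $u\ge v$ and the coincidence on $O$. Since the strong comparison principle is a statement about \p-harmonic functions, nothing is lost by dropping the normalization.

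Part~(b) contains a genuine gap. The step ``since $u$ and $v$ share the same normalization at $0$, their singular parts agree to leading order, so $u-v$ is bounded near $0$'' is false in this space, and its failure is precisely the mechanism that makes Theorem~\ref{thm-nonunique} possible. The normalization \eqref{eq-normalized-Green-intro-deff} fixes capacities of superlevel sets, not pointwise asymptotics at the singularity: for $p=2$ (so $w(x)=|x|^\alp$ with $\alp>0$ and $a_2=\alp$) the normalized Green functions have the form $A_f(r^{-\alp}-1)e^{\alp f(\theta)}$, and two of them have bounded difference only if $f_1-f_2$ is constant on $\Spe$, in which case they are the \emph{same} Green function. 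Thus for any two \emph{distinct} Green functions in this family the difference is unbounded near the origin, the singularity is not removable, and your removability-plus-maximum-principle argument never gets started; assuming closure under addition does not repair this, because the boundedness claim is an independent structural assertion about Green functions, not a consequence of linearity. The paper's proof avoids the issue entirely: take $u=u_2$ and $v=-u_1$ (both $2$-harmonic, since $g_{-u_1}=g_{u_1}$); then $u+v=u_2-u_1=(r^{-\alp}-1)\bigl(e^{\alp f_2(\theta)}-1\bigr)\ge 0$ vanishes exactly on the ray $\theta=0$, so if it were $2$-harmonic the strong maximum principle would force $u_2\equiv u_1$, a contradiction. This also answers the concern in your last paragraph: exhibiting an explicit pair requires no computation of the Finsler $2$-energy of the sum, only the strong maximum principle.
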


The paper is organized in the following way. 
Section~\ref{sect-ug} contains some definitions and preliminary results 
about upper gradients and Newtonian functions.
In Section~\ref{sect-pharm}, \p-harmonic functions and some auxiliary results for them are presented. 
Section~\ref{sect-sing-Green} deals with the definitions of
singular and Green functions and their properties.
The uniqueness Theorem~\ref{thm-unique-Green-intro} is proved in Section~\ref{sect-unique},
while Theorem~\ref{thm-nonunique} and Corollary~\ref{cor-nonls} are proved in 
Section~\ref{sect-nonuniq}.

\begin{ack} 
Most of this
research was conducted while the first three authors were visiting 
the Okinawa Institute of Science and Technology (OIST) through 
the Theoretical Sciences Visiting Program (TSVP) in 2024. 
We thank OIST and TSVP  for their hospitality and support.
A.~B. and J.~B. were also supported by the Swedish Research Council,
  grants 2020-04011 and 2022-04048, respectively. 
  S.~\mbox{E.-B.}
was supported in part by the Research Council of Finland grant 354241. 
X.~Z. was supported by JSPS Grant-in-Aid for Early-Career Scientists 
No.~22K13947 and JSPS Grant-in-Aid for Scientific Research(C) No.~25K00211.
\end{ack}

\section{Upper gradients and Newtonian spaces}
\label{sect-ug}

\emph{We assume throughout the paper that $1 < p<\infty$ and that
$X=(X,d,\mu)$ is a metric space equipped
with a metric $d$ and a positive complete  Borel  measure $\mu$
such that  $0<\mu(B)<\infty$ for all  balls $B \subset X$.
We also assume that $\diam X >0$. Note that it follows from these  assumptions that $X$ is separable.
Additional standing
assumptions are added at  the beginning of Section~\ref{sect-pharm}.}

\medskip

In this section, we introduce
the necessary metric space concepts used in this paper.
Proofs of most of the results mentioned in 
this section can be found in the monographs
Bj\"orn--Bj\"orn~\cite{BBbook} and
Heinonen--Koskela--Shan\-mu\-ga\-lin\-gam--Tyson~\cite{HKSTbook}.

We denote by $B(x,r):=\{y \in X : d(x,y)<r\}$
the open ball of radius $r>0$ centred at $x\in X$. 
For $B:=B(x,r)$ we use the notation $\la B=B(x, \la r)$. 
In some parts, we will also write $B_r$, when the centre of the ball is fixed.
In metric spaces
it can happen that balls with different centres or
radii denote the same set.
We will, however,
make the convention that a ball $B$ comes with a predetermined
centre and radius.
Unless said otherwise, balls are assumed to be open in this paper.
By $\clB$ we mean the closure of the open ball $B=B(x,r)$,
not the (possibly larger) set $\{y:d(x,y)\le r\}$.
We write $\diam A$ for the diameter of a set $A\subset X$. 

The measure $\mu$ is \emph{doubling} 
if there exists a \emph{doubling constant} $C_\mu\ge 1$ such that 
\[
  \mu(2B)\le C_\mu\mu(B) 
\quad \text{for every open ball }B.
\]
The measure   $\mu$ is \emph{Ahlfors $Q$-regular} 
if there is a  constant $C\ge 1$ such that
\[
C^{-1}r^Q\le\mu(B(x,r))\le Cr^Q
\quad
\text{for all $x\in X$ and $0<r< 2 \diam X$}.
\]

A \emph{curve} is a continuous mapping from an interval,
and a \emph{rectifiable} curve is a curve with finite length.
A rectifiable curve can be parameterized by its arc length $ds$. 
A property holds for \emph{\p-almost every curve}
if the curve family $\Ga$ for which it fails has zero \p-modulus,
i.e.\ there is $\rho\in L^p(X)$ such that
$\int_\ga \rho\,ds=\infty$ for every $\ga\in\Ga$.

\begin{deff} \label{deff-ug}
Let  $u: X \to \eR:=[-\infty,\infty]$ be a function.
A Borel function $g: X \to [0,\infty]$
is an \emph{upper gradient}  of $u$
if for all nonconstant rectifiable curves 
$\gamma : [0,l_{\gamma}] \to X$,
\begin{equation} \label{ug-cond}
        |u(\gamma(0)) - u(\gamma(l_{\gamma}))| \le \int_{\gamma} g\,ds,
\end{equation}
where the left-hand side is considered to be $\infty$ 
whenever at least one of the 
terms therein is infinite.
A measurable function $g: X \to [0,\infty]$ 
is a \emph{\p-weak upper gradient} of~$u$
if it satisfies \eqref{ug-cond} for \p-almost every 
nonconstant rectifiable
curve $\ga$.
\end{deff}

The upper gradients were introduced in 
Heinonen--Koskela~\cite{HeKo96}, \cite{HeKo98} 
and  \p-weak upper gradients in  Koskela--MacManus~\cite{KoMc}. 
 It was also shown in \cite{KoMc} 
 that if $g \in \Lploc(X)$ is a \p-weak upper gradient of $u$,
 then one can find a sequence $\{g_j\}_{j=1}^\infty$
 of upper gradients of $u$ such that $\|g_j-g\|_{L^p(X)} \to 0$.
 If $u$ has an upper gradient in $\Lploc(X)$, then
 it has an a.e.\ unique \emph{minimal \p-weak upper gradient} $g_u \in \Lploc(X)$
 in the sense that $g_u \le g$ a.e.\ for every
 \p-weak upper gradient $g \in \Lploc(X)$ of $u$,
 see Shan\-mu\-ga\-lin\-gam~\cite{Sh-harm}.

Following Shan\-mu\-ga\-lin\-gam~\cite{Sh-rev}, 
 we define a version of Sobolev spaces on the metric space $X$.

\begin{deff} \label{deff-Np}
For a measurable function $u: X\to \eR$,
let 
\[
        \|u\|_{\Np(X)} = \biggl( \int_X |u|^p \, d\mu 
                + \inf_g  \int_X g^p \, d\mu \biggr)^{1/p},
\]
where the infimum is taken over all upper gradients $g$ of $u$.
The \emph{Newtonian space} on $X$ is 
\[
        \Np (X) = \{u :  \|u\|_{\Np(X)} <\infty \}.
\]
\end{deff}

The quotient
space $\Np(X)/{\sim}$, where  $u \sim v$ if and only if $\|u-v\|_{\Np(X)}=0$,
is a Banach space and a lattice, see Shan\-mu\-ga\-lin\-gam~\cite{Sh-rev}.

In this paper it is convenient to assume that functions in $\Np(X)$
 are defined everywhere (with values in $\eR$),
not just up to an equivalence class in the corresponding quotient space.
This is important for upper gradients and \p-weak upper gradients to make sense,
and is also convenient in other places, e.g.\ when defining the capacities below.

We say  that $u \in \Nploc(X)$ if
for every $x \in X$ there is a ball 
$B(x,r_x)$
 such that $u \in \Np(B(x,r_x))$. 
If $u,v \in \Nploc(X)$, then $g_u=g_v$ a.e.\ in $\{x \in X : u(x)=v(x)\}$.
In particular $g_{\min\{u,c\}}=g_u \chione_{\{u < c\}}$ a.e.\ in $X$
for $c \in \R$, where $\chione$ denotes the characteristic function.
For any nonempty open set $\Om \subset X$,
the spaces $\Np(\Om)$ and $\Nploc(\Om)$ are defined by
considering $(\Om,d|_\Om,\mu|_\Om)$ as a metric space in its own right.

The  \emph{Sobolev capacity} of an arbitrary set $E\subset X$ is
\[
\Cp(E) =\inf_{u}\|u\|_{\Np(X)}^p,
\]
where the infimum is taken over all $u \in \Np(X)$ such that
$u\geq 1$ on $E$.
The Sobolev capacity is countably subadditive.

A property of
points $x\in X$ holds \emph{quasieverywhere} (q.e.)\
if it is true outside a set of capacity  zero.
The Sobolev capacity is the correct gauge
for distinguishing between two Newtonian functions.
If $u \in \Np(X)$,  then $v \sim u$ if and only if $v=u$ q.e.
Moreover, if $u,v \in \Np(X)$ and $u= v$ a.e., then $u=v$ q.e.

The Newtonian space with zero boundary values is defined by
\[
\Np_0(\Om):=
  \{f|_{\Om} : f \in \Np(X) \text{ and }
        f=0 \text{ on } X \setm \Om\}.
\]

\begin{deff} \label{deff-cp}
Let $\Om\subset X$ be a (possibly unbounded) open set.
The \emph{condenser capacity} of a bounded set
$E \subset \Om$ with respect to 
$\Om$ is
\begin{equation*} % \label{eq-deff-cp}
\cp(E,\Om) = \inf_u\int_{\Om} g_u^p\, d\mu,
\end{equation*}
where the infimum is taken over all 
$u \in \Np_0(\Om)$ such that $u=1$ on $E$.
If no such function $u$ exists then $\cp(E,\Om)=\infty$.
If $E \subset \Om$ is unbounded, we define
(as in Bj\"orn--Bj\"orn~\cite{BBglobal})
\begin{equation*} %\label{eq-cp-lim-deff}
\cp(E,\Om)=\lim_{j \to \infty} \cp(E \cap B_j,\Om).
\end{equation*}
\end{deff}

\begin{deff} \label{def-PI}
Let $q \ge 1$.
We say that $X$ or $\mu$ supports a \emph{$q$-Poincar\'e inequality} if
there exist constants $\CPI>0$ and $\lambda \ge 1$
such that for all balls $B=B(x,r)$, 
every integrable function $u$ on $X$, and all 
upper gradients $g$ of $u$ on $X$, 
\begin{equation} \label{eq-PI} 
       \vint_{B} |u-u_B| \,d\mu
        \le \CPI r \biggl( \vint_{\lambda B} g^{q} \,d\mu \biggr)^{1/q},
\end{equation}
where $ u_B 
 :=\vint_B u \,d\mu 
:= \mu(B)^{-1}\int_B u\, d\mu$.
\end{deff}

There are many equivalent formulations of Poincar\'e inequalities, see e.g.\
\cite[Proposition~4.13]{BBbook} and~\cite{HKSTbook}.
In particular, \eqref{eq-PI} can equivalently be required for all 
\p-weak upper gradients.
At the same time, in complete spaces with a doubling measure $\mu$, 
it suffices if~\eqref{eq-PI} 
holds for all compactly supported Lipschitz functions with compactly supported
Lipschitz upper gradients, see Keith~\cite{KeithMZ03}.

A \emph{weight} $w$ on $\R^n$ is a nonnegative
locally integrable function.
If $d\mu=w \,dx$ is a doubling measure, which
supports a  \p-Poincar\'e inequality on $\R^n$,
then $w$ is called a \emph{\p-admissible weight}.
See Corollary~20.9 in Heinonen--Kilpel\"ainen--Martio~\cite{HeKiMa} and
Proposition~A.17 in~\cite{BBbook}
for why this is equivalent to other definitions in the literature.
Moreover, in this case $g_u=|\nabla u|$ a.e.\ if $u \in \Np(\R^n)$,
where $\nabla u$ is the 
 gradient from~\cite{HeKiMa},
and the Sobolev and condenser capacities coincide with those 
in~\cite{HeKiMa}, see 
Theorem~6.7 and Proposition~A.12 in~\cite{BBbook} and 
Proposition~7.2 in 
Bj\"orn--Bj\"orn~\cite{BBglobal}.
In particular, $\R^n$ equipped with a \p-admissible weight is
a special case of the metric spaces considered in this paper
and all our results apply in that case.
A rich potential theory for such weighted $\R^n$ was
developed  in~\cite{HeKiMa}.
Also many manifolds and other spaces are included in our study.

Throughout the paper, we write $a \simle b$ and $b \simge a$ if there is an implicit
comparison constant $C>0$ such that $a \le Cb$, 
and $a \simeq b$ if $a \simle b \simle a$.
The implicit comparison constants are allowed to depend on the 
fixed data.

A \emph{domain} is a nonempty connected open set.
As usual, we write $u_+= \max\{u,0\}$,
and by $E \Subset \Om$ we mean that $\itoverline{E}$
is a compact subset of $\Om$.
In this  paper, a continuous function is always assumed
to be real-valued (as opposed to $\eR$-valued).

\section{\texorpdfstring{\p}{p}-harmonic and superharmonic functions}
\label{sect-pharm}

\emph{In addition to the assumptions from the beginning of Section~\ref{sect-ug},
  we assume from now on that
  $X$ is a complete  metric space
equipped with a doubling measure $\mu$ that
supports a \p-Poincar\'e inequality, where $1<p<\infty$.
We also fix a point $x_0 \in X$, write $B_r=B(x_0,r)$,
and assume that $\Om \subset X$ is a nonempty open set.
}

\medskip

It follows from these assumptions that $X$ is  proper
(i.e.\ every closed bounded set is compact) and connected.
Moreover, $X$ is \emph{quasiconvex}, i.e.\ there is a constant $C$
such that for every pair of points $x,y \in X$ there is a rectifiable curve $\ga$
from $x$ to $y$ with length $\ell_\ga \le C d(x,y)$.
In particular $X$ is locally connected, and components of open sets
are open.
See e.g.\ \cite[Proposition~3.1  and Theorem~4.32]{BBbook} or 
\cite[Lemma~4.1.14 and~Theorem~8.3.2]{HKSTbook}.

A function $u \in \Nploc(\Om)$ is a
\emph{minimizer} in $\Om$
if 
\begin{equation} \label{eq-def-pharm}
      \int_{\phi \ne 0} g^p_u \, d\mu
           \le \int_{\phi \ne 0} g_{u+\phi}^p \, d\mu
           \quad \text{for all } 
\phi \in \Np_0(\Om).
\end{equation} 
Equivalently,  one can require \eqref{eq-def-pharm} 
only for $\phi \in \Lipc(\Om)$,
see Bj\"orn~\cite[Proposition~3.2]{ABkellogg} 
(or \cite[Proposition~7.9]{BBbook}) for
this  
and other characterizations.
A continuous minimizer is called a \emph{\p-harmonic function}.
(It follows from Kinnunen--Shan\-mu\-ga\-lin\-gam~\cite{KiSh01} that
every minimizer has a continuous representative.)

\begin{deff} \label{deff-superharm-class}
A function $u : \Om \to (-\infty,\infty]$ is 
\emph{superharmonic} in $\Om$ if
\begin{enumerate}
\renewcommand{\theenumi}{\textup{(\roman{enumi})}}%
\item \label{cond-a} $u$ is lower semicontinuous;
\item \label{cond-b} 
 $u$ is not identically $\infty$ in any component of $\Om$;
\item \label{cond-c}
for every nonempty open set $G \Subset \Om$ with $\Cp(X \setm G)>0$
and every function $v\in C(\clG)$ which is \p-harmonic in $G$ and such
that $v\le u$ on $\bdy G$, we have $v\le u$ in $G$.
\end{enumerate}
\end{deff}

This definition of superharmonicity is the same as the one
usually used in the Euclidean literature, e.g.\ in
Hei\-no\-nen--Kil\-pe\-l\"ai\-nen--Martio~\cite[Section~7]{HeKiMa}.
It is equivalent to other definitions of superharmonicity on metric spaces,
by Theorem~6.1 in Bj\"orn~\cite{ABsuper}
(or \cite[Theorem~14.10]{BBbook}).
It is not difficult to see that a function $u$ is \p-harmonic if and only if 
both $u$ and $-u$ are superharmonic.

The Harnack
inequality for nonnegative \p-harmonic function was obtained 
in Kinnunen--Shan\-mu\-ga\-lin\-gam~\cite[Corollary~7.3]{KiSh01} using De Giorgi's method
(see however \cite[Section~10]{BMarola} or \cite[Section~8.4]{BBbook} for
some necessary modifications). 
A different proof, using Moser iteration, was given by 
Bj\"orn--Marola~\cite[Theorem~9.3]{BMarola},
and one using a combination of both methods by 
Bj\"orn--Bj\"orn~\cite[Theorem~8.12]{BBbook}. 
The strong maximum principle is a direct consequence of the Harnack inequality.

\begin{theorem} \label{thm-Harnack} 
\textup{(Harnack's inequality)}
There exists a constant  $A>0$ depending only on $p$ and 
the doubling and Poincar\'e constants $C_\mu$, $\CPI$ and $\la$,
such that for any \p-harmonic function $u\ge 0$ in $\Om$, one has
\[
\sup_{B} u \le A \inf_{B} u,
\]
for every ball $B \subset 50\la B \subset \Omega$. 
\end{theorem}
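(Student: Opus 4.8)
The plan is to prove Harnack's inequality by Moser iteration, adapted to the metric setting as in Bj\"orn--Marola~\cite{BMarola} (an alternative is the De Giorgi scheme of Kinnunen--Shanmugalingam~\cite{KiSh01}, which the citation in the statement relies on). First I would replace $u$ by $u+\eps$ for $\eps>0$ so that negative powers are well defined, prove the estimate with a constant independent of $\eps$, and let $\eps\to0$ at the end. The target is to establish the two one-sided bounds
\begin{equation*}
\sup_{B} u \simle \biggl( \vint_{2B} u^{s}\,d\mu \biggr)^{1/s}
\quad\text{and}\quad
\biggl( \vint_{2B} u^{-s}\,d\mu \biggr)^{-1/s} \simle \inf_{B} u
\end{equation*}
for one fixed small exponent $s>0$, and then to close the gap between the positive and negative exponents by a logarithmic estimate.

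The reverse H\"older inequalities underlying both bounds come from Caccioppoli-type energy estimates. Since no Euler--Lagrange equation is available, I would derive these directly from the minimizer inequality~\eqref{eq-def-pharm}, inserting competitors built from $u$, powers $u^{\be}$, and Lipschitz cutoffs $\eta$, and controlling the upper gradient of the resulting products by the Leibniz and chain rules for minimal \p-weak upper gradients together with Young's inequality. Combined with the Sobolev inequality (a standard consequence of doubling and the \p-Poincar\'e inequality), this yields, on nested balls $B_r\subset B_R\subset 2B$, a gain of integrability
\begin{equation*}
\biggl( \vint_{B_r} u^{\kappa\be}\,d\mu \biggr)^{1/\kappa}
\simle \frac{C(\be)\,R^p}{(R-r)^p}\,\vint_{B_R} u^{\be}\,d\mu
\end{equation*}
with a fixed Sobolev exponent $\kappa>1$, valid for $\be$ in suitable ranges. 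Iterating over a geometric sequence of radii and exponents $\be_j=\be_0\kappa^{j}$ produces the sup-bound from large positive powers, and running the same scheme with negative exponents $\be\to-\infty$ produces the inf-bound.

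It remains to bridge the positive and negative exponents. For this I would prove a logarithmic Caccioppoli estimate---again from~\eqref{eq-def-pharm}, now testing against competitors adapted to $\log u$---to obtain
\begin{equation*}
\vint_{2B} |\log u - (\log u)_{2B}|\,d\mu \simle 1,
\end{equation*}
so that $\log u$ lies in $\mathrm{BMO}(2B)$ with a universal seminorm bound. The John--Nirenberg inequality, which holds on balls in a doubling space, then furnishes a small $s>0$ with
\begin{equation*}
\vint_{2B} u^{s}\,d\mu \cdot \vint_{2B} u^{-s}\,d\mu \simle 1.
\end{equation*}
Chaining the sup-bound, this crossover estimate, and the inf-bound gives $\sup_B u\simle\inf_B u$, and letting $\eps\to0$ completes the proof. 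The enlargement to $50\la B\subset\Om$ is precisely what provides room for the finitely many nested balls in the iteration and absorbs the dilation factor $\la$ in each application of the \p-Poincar\'e inequality.

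I anticipate that the main obstacle is the derivation of the Caccioppoli and logarithmic estimates in the absence of a PDE: every integration-by-parts step of the classical Moser argument must be replaced by a careful manipulation of the minimizer inequality and of upper gradients of products and compositions, where the sign of $\be$ and the cancellation in $g_{u+\phi}^p-g_u^p$ have to be tracked precisely. The analytic inputs (the Sobolev and John--Nirenberg inequalities) are by contrast standard in the complete, doubling, \p-Poincar\'e setting.
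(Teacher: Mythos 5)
The paper does not prove this theorem itself: it is stated as a known background result, with references to the De Giorgi-type proof of Kinnunen--Shanmugalingam \cite[Corollary~7.3]{KiSh01} (with modifications as in \cite{BMarola}, \cite{BBbook}) and the Moser-iteration proof of Bj\"orn--Marola \cite[Theorem~9.3]{BMarola}. Your outline---Caccioppoli-type estimates extracted directly from the minimizer inequality~\eqref{eq-def-pharm}, reverse H\"older inequalities via the Sobolev inequality, iteration over positive and negative powers, and the $\log u \in \mathrm{BMO}$ / John--Nirenberg crossover---is essentially the Bj\"orn--Marola argument that the paper cites, so your proposal is correct and takes the same approach as (one of) the proofs the paper relies on.
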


We need the following version of Harnack's inequality, which is obtained by iteration.

\begin{prop}\label{prop:Harnack} 
There are constants $C_0,\al>0$, depending only on $p$ and
the doubling and Poincar\'e constants $C_\mu$, $\CPI$ and $\la$,
such that if $u\ge0$ is \p-harmonic in a ball $B$, then for all $0<\de\le 1/50\la$,
\[
\sup_{\de B} u \le (1+C_0\de^\al)  \inf_{\de B} u.
\]
\end{prop}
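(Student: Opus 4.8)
The plan is to upgrade Harnack's inequality (Theorem~\ref{thm-Harnack}) into a geometric oscillation decay and then convert the resulting oscillation bound into the sharp multiplicative estimate. Write $B=B(z,r)$ for the ball in the statement and set $M(\rho):=\sup_{B(z,\rho)}u$ and $m(\rho):=\inf_{B(z,\rho)}u$ for $0<\rho\le r$. We may assume $u\not\equiv0$, since otherwise the inequality is trivial; then the strong maximum principle gives $u>0$ in $B$, and as $X$ is proper the sets $\itoverline{B(z,\rho)}$ are compact in $B$ for $\rho<r$, so $M(\rho)$ and $m(\rho)$ are finite and strictly positive. Since adding a constant and negating preserve minimizers and the minimal \p-weak upper gradient, both $M(\rho)-u$ and $u-m(\rho)$ are nonnegative \p-harmonic functions in $B(z,\rho)$.

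The first key step is the oscillation decay. Fix $\rho<r$, put $\sigma=\rho/(50\la)$, and note that $50\la B(z,\sigma)=B(z,\rho)\subset B$, so Theorem~\ref{thm-Harnack} applies on $B(z,\sigma)$. Applying it to $M(\rho)-u$ yields $M(\rho)-m(\sigma)\le A(M(\rho)-M(\sigma))$, and applying it to $u-m(\rho)$ yields $M(\sigma)-m(\rho)\le A(m(\sigma)-m(\rho))$. Adding these and rearranging gives
\begin{equation*}
 M(\sigma)-m(\sigma)\le\theta\bigl(M(\rho)-m(\rho)\bigr),\qquad\text{where }\theta:=\frac{A-1}{A+1}<1 .
\end{equation*}

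Next I would iterate this decay from the radius $R_0:=r/(50\la)$, setting $\al:=\log(1/\theta)/\log(50\la)>0$. After $k$ steps one reaches radius $r/(50\la)^{k+1}$ with $M-m$ there at most $\theta^k\bigl(M(R_0)-m(R_0)\bigr)$. Given $0<\de\le1/(50\la)$, choosing $k$ maximal with $r(50\la)^{-k-1}\ge\de r$ makes $B(z,\de r)\subset B(z,r(50\la)^{-k-1})$ and forces $\theta^k\le\theta^{-1}\de^\al$, so that by monotonicity of $M$ and $m$,
\begin{equation*}
 M(\de r)-m(\de r)\le\theta^{-1}\de^\al\bigl(M(R_0)-m(R_0)\bigr).
\end{equation*}
To pass from oscillation to ratio, apply the plain Harnack inequality on $B(z,R_0)$ (whose $50\la$-dilate is $B$) to get $M(R_0)\le A\,m(R_0)$, hence $M(R_0)-m(R_0)\le(A-1)m(R_0)\le(A-1)m(\de r)$, where the last inequality uses $m(R_0)\le m(\de r)$ because $B(z,\de r)\subset B(z,R_0)$. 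Combining the two displays,
\begin{equation*}
 \frac{\sup_{\de B}u}{\inf_{\de B}u}=1+\frac{M(\de r)-m(\de r)}{m(\de r)}\le1+\frac{A-1}{\theta}\,\de^\al=:1+C_0\de^\al,
\end{equation*}
which is the claim, with $C_0$ and $\al$ depending only on $A$, $\theta$ and $\la$, and hence only on $p$, $C_\mu$, $\CPI$ and $\la$.

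The iteration producing the factor $\de^\al$ is routine. The points demanding care are keeping every dilated ball inside $B$ so that Theorem~\ref{thm-Harnack} is applicable — which is exactly what the hypothesis $\de\le1/(50\la)$ secures — checking that $M(\rho)-u$ and $u-m(\rho)$ are legitimate nonnegative \p-harmonic competitors for Harnack, and the final algebraic passage from the oscillation bound to the multiplicative ratio, for which the monotonicity $m(R_0)\le m(\de r)$ and the strict positivity $m(\de r)>0$ from the strong maximum principle are essential.
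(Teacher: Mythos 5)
Your proof is correct up to one harmless constant slip, and it rests on the same engine as the paper's proof --- iterating Theorem~\ref{thm-Harnack} across the geometric scales $(50\la)^{-j}$ --- but the recursive quantity is genuinely different. You run the classical oscillation-decay argument: two Harnack applications per scale (to $M(\rho)-u$ and to $u-m(\rho)$) give $M(\sigma)-m(\sigma)\le\frac{A-1}{A+1}\bigl(M(\rho)-m(\rho)\bigr)$ for $\sigma=\rho/50\la$, and you must then convert this additive oscillation bound into the multiplicative statement at the very end, via one more Harnack application at scale $R_0$ together with the monotonicity $m(R_0)\le m(\de r)$ and the positivity $m(\de r)>0$. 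The paper instead works with the chain $B^j=(50\la)^j\de B$ and sets up a recursion directly for the multiplicative gap: a single Harnack application per scale (to $u-m_{j+1}$), combined with $m_{j+1}\le m_j$, yields $M_j/m_j-1\le\frac{A-1}{A}\bigl(M_{j+1}/m_{j+1}-1\bigr)$, and one final Harnack at the top scale bounds $M_k/m_k-1\le A-1$, so no separate conversion step is needed. The two arguments are of comparable length and depth; yours is more modular (oscillation decay is the standard route to H\"older continuity, so each ingredient is off-the-shelf) and even gives a slightly better per-scale contraction $\frac{A-1}{A+1}<\frac{A-1}{A}$, while the paper's bookkeeping is leaner because it never leaves the quantity $\sup/\inf-1$ that appears in the conclusion.

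The slip: from the maximality of $k$ (that is, $(50\la)^{-k-2}<\de$) you may conclude $\theta^k<\theta^{-2}\de^\al$, not $\theta^k\le\theta^{-1}\de^\al$; in fact the defining property $(50\la)^{-k-1}\ge\de$ forces the opposite inequality $\theta^k\ge\theta^{-1}\de^\al$, so your displayed claim cannot hold as written. This is immaterial: it only changes your final constant to $C_0=(A-1)\theta^{-2}$, and everything else in the argument, including the applicability of Theorem~\ref{thm-Harnack} with $50\la B(z,\sigma)$ equal to the domain of harmonicity (a reading the paper's own proof also relies on), is in order.
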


\begin{proof}
Let $k$ be the smallest integer such that $\de> (50\la)^{-k-2}$. 
Note that $k\ge0$.  
For $j=0,1,\dots, k+1$, let 
\[
B^j=(50\la)^j \de B,   \quad M_j = \sup_{B^j} u \quad \text{and} 
\quad m_j = \inf_{B^j} u.
\]
Then $u-m_{j+1}$ is a nonnegative \p-harmonic function in 
$B^{j+1} = 50\la B^j \subset B$.
Let $A$ be the constant in Harnack's inequality (Theorem~\ref{thm-Harnack}).
Thus
\[
M_j-m_{j+1}  \le A(m_j-m_{j+1} )  
\]
and hence
\[
A(M_j-m_{j})  
 =(A-1)M_j + M_j -A m_j
\le
(A-1)(M_j-m_{j+1} ).  
\]
Dividing by $Am_{j+1}$, we get
\[
\frac{M_j}{m_j}  -1\le \frac{M_j-m_j}{m_{j+1}}   \le \frac{(A-1)(M_{j} - m_{j+1})}{Am_{j+1}} 
\le \frac{A-1}{A} \biggl( \frac{M_{j+1}}{m_{j+1}} - 1 \biggr).
\]
Iterating this inequality and another use of the Harnack inequality yields
\[
\frac{M_{0} }{m_0} - 1 
\le \biggl( \frac{A-1}{A} \biggr)^k \biggl( \frac{M_{k}}{m_{k}} - 1 \biggr)
\le \biggl( \frac{A-1}{A} \biggr)^{k+1} A \le C_0 \de^\al,
\]
where $\al \log 50\la = \log(A/(A-1))>0$ 
and $C_0= A(50\la)^{\al}$.
\end{proof}

The following lemma will serve as
a substitute for the Loewner-type estimate \cite[Lemmas~2.11 and~2.12]{BCZ} 
used in the proof of Lemma~3.4 in
Bonk--Capogna--Zhou~\cite{BCZ} for $Q$-harmonic functions
in Ahlfors $Q$-regular spaces.
Here we do not assume
 Ahlfors regularity and consider \p-harmonic functions for any $1<p<\infty$.

\begin{lem}  \label{lem-est-M-m-2B}
Let $u$ be  a superharmonic function in a domain $\Om$, which
is \p-harmonic in $\Om\setm \clB$  for some ball $B=B(x_0,r)$ such that 
$4B \subset \Om$.
Let
\[
M = \sup_{3B \setm 2B} u \quad \text{and} \quad m = \inf_{3B\setm 2B} u
\]
and assume that $u$ is normalized so that for all $m \le a<b\le M$,
\begin{equation}   \label{eq-normalized}
\int_{\{x\in\Om:a< u(x)<b\}} g_u^p \, d\mu = b-a.
\end{equation}
If $\Om_M:= \{x\in\Om:u(x)>M\} \Subset\Om$, then
\[
M-m  \simle \biggl( \frac{\mu(B)}{r^p} \biggr)^{1/(1-p)},
\]
where the comparison constant in $\simle$ only depends on $p$ and
the doubling and 
Poincar\'e constants
$C_\mu$, $\CPI$ and $\la$.
\end{lem}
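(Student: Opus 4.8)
The plan is to combine the normalization \eqref{eq-normalized}, which pins down the $p$-energy of a suitable truncation of $u$, with a capacitary lower bound for that same energy; the latter will play the role of the Loewner estimate used in \cite{BCZ}. We may assume $M>m$, since otherwise there is nothing to prove. First I would use the strong maximum principle (a consequence of Harnack's inequality, Theorem~\ref{thm-Harnack}) to localize $\Om_M=\{u>M\}$: since $\Om_M\Subset\Om$ by hypothesis, every component of $\Om_M$ is compactly contained in $\Om$, and a component disjoint from $\clB$ would be a region where $u$ is $p$-harmonic with $u\le M$ on its boundary, contradicting $u>M$ inside; hence every component meets $\clB$, and as $u\le M$ on the connected annulus $3B\setm 2B$ no component can cross it, so $\Om_M\subset 2B$. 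In particular $u\le M$ on $4B\setm 2B$, while the minimum principle for superharmonic functions gives $u\ge m$ on $2B$, so that $\{u<m\}\subset\{x:d(x,x_0)>3r\}$. The natural test function is $v=\min\{(u-m)_+,M-m\}$; since $g_v=g_u\chi_{\{m<u<M\}}$ a.e., the normalization \eqref{eq-normalized} (with $a=m$, $b=M$) gives exactly $\int_\Om g_v^p\,d\mu=M-m$. Writing $\bar v=v/(M-m)$, this reads $\int_\Om g_{\bar v}^p\,d\mu=(M-m)^{1-p}$, so it remains to bound this energy below by a multiple of $\mu(B)/r^p$.

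The heart of the argument is to produce two continua of diameter $\gtrsim r$, lying close together, on which $\bar v$ takes the values $1$ and $0$. Because $u$ attains $M$ on $\overline{3B\setm 2B}$ at some point $y_M$, and a $p$-harmonic function attaining its supremum at an interior point of the annulus would be constant there (strong maximum principle), $y_M$ must lie on $\bdy(2B)\cup\bdy(3B)$; as $u\le M$ on both sides of $\bdy(3B)$ (using $\Om_M\subset 2B$), the point $y_M$ would be a local maximum, hence forces constancy, unless $y_M\in\bdy(2B)$ and is approached by $\Om_M$. The component $V$ of $\Om_M$ with $y_M\in\overline V$ then meets $\clB$ and reaches $\bdy(2B)$, so $E_1:=\overline V\subset\{u\ge M\}$ is a continuum with $\diam E_1\ge r$ on which $\bar v=1$. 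Symmetrically, the infimum point $y_m$ lies on $\bdy(3B)$ and is approached by $\{u<m\}$; the component $W$ of $\{u<m\}$ with $y_m\in\overline W$ is $p$-harmonic with boundary value $m$, so by the maximum principle it cannot be compactly contained in $\Om$ and must reach $\bdy\Om$ (or infinity). Since $4B\subset\Om$, the part of $\overline W$ inside $\overline{4B}$ is a continuum $E_2\subset\{u\le m\}$ with $\diam E_2\ge r$ on which $\bar v=0$, and both continua lie in $\overline{4B}$.

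Finally I would invoke the capacitary (Loewner-type) lower bound: in a complete doubling $p$-Poincar\'e space, any Newtonian function equal to $1$ on a continuum $E_1$ and $0$ on a continuum $E_2$, both of diameter $\gtrsim r$ and contained in $B(x_0,4r)$, has $p$-energy $\gtrsim\mu(B)/r^p$. Equivalently, the $p$-modulus of the family $\Ga$ of curves joining $E_1$ to $E_2$ is $\gtrsim\mu(B)/r^p$; and since for $p$-almost every $\gamma\in\Ga$ one has $\int_\gamma g_{\bar v}\,ds\ge 1$ (as $\bar v=1$ on $E_1$ and $\bar v=0$ on $E_2$), the function $g_{\bar v}$ is admissible for $\Ga$. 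Combining these yields $\mu(B)/r^p\simle\int_\Om g_{\bar v}^p\,d\mu=(M-m)^{1-p}$, that is $M-m\simle(\mu(B)/r^p)^{1/(1-p)}$, as claimed. The main obstacle is the geometric input of the second paragraph: locating, via the strong maximum and minimum principles together with the hypothesis $\Om_M\Subset\Om$, exactly where $u$ attains $M$ and $m$ on the annulus, and verifying that the corresponding level sets genuinely contain continua of diameter comparable to $r$. Once these are in hand, the continuum capacity bound (the substitute for \cite[Lemmas~2.11 and~2.12]{BCZ}) is a standard consequence of the $p$-Poincar\'e inequality.
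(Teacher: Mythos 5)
Your reduction is set up correctly and matches the paper's first steps: the truncation $v=\min\{(u-m)_+,M-m\}$ together with the normalization \eqref{eq-normalized} turns the lemma into the lower bound $\int_\Om g_v^p\,d\mu \simge (M-m)^p\mu(B)/r^p$, and your localization $\Om_M\subset 2B$ is essentially the paper's argument. The proof collapses, however, at the final step: the ``continuum capacity bound'' you invoke is not a standard consequence of the \p-Poincar\'e inequality; in the generality of this lemma it is \emph{false}. Take $X=\R^3$ with Lebesgue measure (complete, doubling, supporting a $1$-Poincar\'e inequality) and $p=2$, and let $E_1,E_2$ be two parallel line segments of length $r$ inside $B(x_0,4r)$. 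A segment is polar in $\R^3$ for $p=2$ (zero $2$-capacity), so $2$-a.e.\ curve avoids $E_1$; hence the family of curves joining $E_1$ to $E_2$ has $2$-modulus $0$, and there are Newtonian functions equal to $1$ on $E_1$ and $0$ on $E_2$ with arbitrarily small energy, while $\mu(B)/r^p\simeq r>0$. Thickening the segments into $\eps$-tubes (so that the continua contain open sets, as your $\overline{V}$ and $\overline{W}$ do) does not help: the condenser capacity of two such tubes is $\simeq r/\log(r/\eps)\to0$ as $\eps\to0$, so no bound of the form $\simge \mu(B)/r^p$ with constants depending only on the data and on $\diam E_i/r$ can hold. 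Loewner-type bounds of the kind you want require the continua to be uniformly \p-fat, which is exactly what Ahlfors $Q$-regularity with $p=Q$ supplies in \cite{BCZ} and what fails for small $p$; the sentence preceding the lemma states that its purpose is precisely to replace that estimate. Since nothing in your construction prevents the components $V$ and $W$ from being arbitrarily thin tentacles, the gap cannot be closed by a better choice of continua.

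There are also genuine gaps in the geometric construction itself. The claim that $y_m$ ``is approached by $\{u<m\}$'' does not follow from the strong minimum principle: the alternative is that $u\equiv m$ on a whole neighbourhood of $y_m$ (a plateau), and no unique-continuation principle is available in this setting to exclude plateaus; similarly $y_M\in\overline{\Om_M}$ may fail, and $\Om_M=\emptyset$ is allowed by the hypotheses. Moreover, ``$u\ge M$ on $\overline{V}$'' breaks down at points of $\overline{V}\cap\clB$, where $u$ is merely lower semicontinuous rather than continuous, and the same issue undermines your maximum-principle argument that every component of $\Om_M$ meets $\clB$: at boundary points lying on $\bdry B$ you only know $u\le M$ pointwise, not $\limsup_{V\ni y\to x}u(y)\le M$, which is what the comparison principle needs. (Also, $3B\setm2B$ need not be connected in a metric space, and $\overline{W}\cap\overline{4B}$ need not be connected.) The paper's proof avoids all of this: it takes the two extremal points $x_M\in\bdry(2B)$ and $x_m\in\bdry(3B)$, uses the iterated Harnack inequality (Proposition~\ref{prop:Harnack}) to produce two balls $\de B^M,\de B^m\subset 4B$ of measure $\simeq\mu(B)$ on which $u$ is within $\tfrac14(M-m)$ of $M$, respectively of $m$, and then applies the \p-Poincar\'e inequality to the truncation of $u$ between the adjusted levels $m'$ and $M'$, invoking \eqref{eq-normalized} only for that truncation. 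Harnack-fattened balls, rather than level-set continua, are what make the mean-oscillation lower bound work; this is exactly the substitute for the Loewner estimate that your proposal is missing.
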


\begin{proof}
By the lower semicontinuity and the strong
minimum principle for superharmonic functions 
\cite[Theorem~9.13]{BBbook},
we see that 
\[
m = \min_{\overline{3B}} 
u
\]
is attained at some $x_m\in \bdry (3B)$ and that 
$3B \subset \{x\in\Om:u(x) \ge m\}$.
 
Next, let $G$ be a component of $\Om_M$.  
Since $u\le M$ in $3B\setm 2B$, we have
either $G\subset 2B$ or $G\subset \Om\setm 3B$.
However, the latter is impossible because of the strong maximum principle for \p-harmonic functions
and the assumption that $G\Subset \Om$.
We therefore conclude that $\Om_M\subset 2B$ and
by the continuity of $u$ in $\Om\setm \clB$, there is $x_M\in \bdry (2B)$ such that $M= u(x_M)$.

Next, note that $u-m$ and  $M-u$ are nonnegative \p-harmonic functions in 
\[
B^M:=B(x_M,r)\subset 3B\setm B
\quad \text{and} \quad 
B^m:=B(x_m,r)\subset 4B\setm 2B,
\]
respectively.
Proposition~\ref{prop:Harnack} then provides us with 
$0<\de<1$,
depending only on 
$p$, $C_\mu$, $\CPI$ and $\la$,
so that 
\[
\sup_{\de B^M} (u-m) \le \frac43 \inf_{\de B^M} (u-m)
\quad \text{and} \quad
\sup_{\de B^m} (M-u) \le \frac43 \inf_{\de B^m} (M-u).
\]
Then 
\[
M' := \inf_{\de B^M} u \ge m + \frac34 (M-m)
\quad \text{and} \quad
m' := \sup_{\de B^m} u \le M - \frac34 (M-m).
\]
Subtracting the second inequality from the first, we get that
\begin{equation}   \label{eq-M-m-M'-m'}
M'-m' \ge \tfrac12 (M-m).
\end{equation}
Next, let $v=\min\{M',\max\{m',u\}\}$ be the truncation of $u$ at 
the levels $m'$ and~$M'$.
Then, by the doubling property of $\mu$,
\[
\vint_{4B} |v-v_{4B}| \, d\mu 
\ge \frac12 (M'-m') \frac{\min\{\mu(\de B^M),\mu(\de B^m)\}} {\mu(4B)}
\simge M'-m'.
\]
Inserting this into the \p-Poincar\'e inequality for $v$ in $4B$ implies that
\[
M'-m' \simle r \biggl( \vint_{4\la B}  g_v^p \, d\mu \biggr)^{1/p}
\simle \frac{r}{\mu(B)^{1/p}} \biggl( \int_{\{x\in\Om:m'< u(x)<M'\}}  g_u^p \, d\mu \biggr)^{1/p},
\]
where $\la$ is the dilation constant in the \p-Poincar\'e inequality.
Note that $m \le m' \le M' \le M$.
The normalization~\eqref{eq-normalized} 
then yields
\[
M'-m' \simle  \frac{r}{\mu(B)^{1/p}} (M'-m')^{1/p}
\]
and hence
\[
M'-m' \simle \biggl(  \frac{r^p}{\mu(B)} \biggr)^{1/(p-1)}.
\]
Finally, \eqref{eq-M-m-M'-m'} concludes the proof.
\end{proof}

\section{Singular and Green functions}
\label{sect-sing-Green}

Green functions are properly normalized singular functions, see 
Definition~\ref{deff-Green} and also Theorem~\ref{thm-Green} below
for the close relation between singular and Green functions.
We begin by defining singular functions.
The following definition was formulated in 
Bj\"orn--Bj\"orn--Lehrb\"ack~\cite{BBLgreen} for bounded $\Om$.

 \begin{deff} \label{deff-sing-bdd}
\textup{(\cite[Definition~1.1]{BBLgreen})}
 Let $\Om \subset X$ be a bounded domain. 
 A positive
 function $u:\Om \to (0,\infty] $ is a \emph{singular function} in $\Om$
 with singularity at $x_0 \in \Om$ if it satisfies the following
 properties\/\textup{:} 

 \addjustenumeratemargin{(S1)}% To adjust leftmargin in enumerate
 \begin{enumerate}
 \renewcommand{\theenumi}{\textup{(S\arabic{enumi})}}%
  \item \label{dd-h}
 $u$ is \p-harmonic in $\Om \setm \{x_0\}$\textup{;}
\item \label{dd-s}
 $u$ is superharmonic in $\Om$\textup{;}  
 \item \label{dd-sup}
 $u(x_0)=\sup_\Om u$\textup{;}
 \item \label{dd-inf}
 $\inf_\Om u = 0$\textup{;}
 \item \label{dd-Np0}
 $\ut \in \Nploc(X \setm \{x_0\})$, where
 \[
    \ut = \begin{cases}
      u & \text{in } \Om, \\
      0 & \text{on } X \setm \Om.
      \end{cases}
 \]
 \end{enumerate}
\end{deff}
% \medskip needed as enumerate ends with display
\medskip

The last condition~\ref{dd-Np0} says that $u$ has ``zero boundary values
in the Sobolev sense''.
This makes it possible to treat arbitrary bounded domains $\Om$ and not only 
regular bounded domains, where the zero boundary data are attained
as limits $\lim_{\Om\ni y\to x}u(y) =0$ for all $x\in\bdy\Om$.

However, Definition~\ref{deff-sing-bdd}
is not suitable for unbounded domains $\Om$, including $X$ itself, since 
the condition $\ut \in \Nploc(X \setm \{x_0\})$ 
does not capture the zero boundary condition at $\binfty$.
This is demonstrated by the following example.
To simplify the notation,  we use $\lim_{x \to \binfty}$ as a shorthand
for $\lim_{d(x,x_0)\to \infty}$.

\begin{example} 
Let $X=\R^n$, $n>2=p$ and $\Om= \R^n\setm \itoverline{B(0,1)}$. 
Let $u$ be the classical $2$-harmonic Green function
in $\Om$ with singularity at some $x_0 \in \Om$.
Then $\lim_{x \to \binfty} u(x)=0$ and $u$ satisfies 
Definition~\ref{deff-sing-bdd}.
Let $c>0$.
Then also the function $\ub(x):=u(x)+c(1-|x|^{2-n})$ satisfies the conditions in 
Definition~\ref{deff-sing-bdd}, but
$\lim_{x\to\binfty} \ub(x) = c>0$ is not desirable 
for Green functions, since $\R^n$ is $2$-hyperbolic
in the sense of Definition~\ref{def-p-par}.

Replacing $\ut \in \Nploc(X \setm \{x_0\})$ 
in Definition~\ref{deff-sing-bdd} by 
$\ut \in \Np(X \setm \clB_r)$ for every $r>0$
does not help since the desired Green function (such as $u(x)=c|x|^{(p-n)/(p-1)}$
in $\R^n$) is typically not $L^p$-integrable at $\binfty$.
\end{example}

Thus, for unbounded domains, the zero boundary value at $\binfty$ needs to be captured 
in a different way than by Newtonian spaces.
One way of doing this simultaneously both for finite boundary points 
and for $\binfty$ 
is as in Bj\"orn--Bj\"orn~\cite{BBglobal} by means of Perron solutions 
with respect to the extended boundary 
\[
\bdystar \Om:= \begin{cases}   
     \bdy\Om \cup \{\binfty\},   &  \text{if $\Om$ is unbounded,}  \\ 
     \bdy \Om,  & \text{otherwise,}  
\end{cases}
\]
where $\binfty$ is the point added in the \emph{one-point compactification} 
$\Xstar:=X \cup \{\binfty\}$ of~$X$ when $X$ is unbounded.

\begin{deff}\label{def:Perron}
Assume that $\bdystar \Om \ne \emptyset$.
  Given $f:\bdystar\Omega\to\eR$, 
let $\UU_f(\Omega)$ be the collection of all superharmonic functions 
$u$ in $\Omega$ that are bounded from below and such that 
\[
	\liminf_{\Omega\ni y\to x} u(y) \geq f(x)
	\quad\textup{for all }x\in\bdystar\Omega.
\]
The \emph{upper Perron solution} of $f$ is defined by 
\[
	\uP_\Omega f(x)
	= \inf_{u\in\UU_f(\Omega)} u(x),
	\quad x\in\Omega.
\]
The \emph{lower Perron solution} 
is 
given by 
$\lP_\Omega f:= - \uP_\Omega (-f)$.
If $\uP_\Omega f=\lP_\Omega f$, then we 
denote the common value by $P_\Omega f$.
\end{deff}

Perron solutions make 
it possible to define singular functions also in unbounded domains
as follows.

\begin{deff} \label{deff-sing-unbdd}
\textup{(\cite[Definition~9.1]{BBglobal})}
Let $\Om \subset X$ be a (possibly unbounded) domain. 
A positive function $u:\Om \to (0,\infty]$ 
is a \emph{singular function} in 
$\Om$
with singularity at $x_0 \in \Om$ if
it satisfies the following
properties\/\textup{:} 

\addjustenumeratemargin{(S1$'$)}% To adjust leftmargin in enumerate
\begin{enumerate}
\renewcommand{\theenumi}{\textup{(S\arabic{enumi})}}%
\item \label{uu-h}
$u$ is \p-harmonic in $\Om \setm \{x_0\}$,
\item \label{uu-s}
$u$ is superharmonic in $\Om$,
\item \label{uu-sup}
$u(x_0)=\sup_\Om u$,
\item \label{uu-inf}
$\inf_\Om u = 0$, 
\renewcommand{\theenumi}{\textup{(S\arabic{enumi}$'$)}}%
\item \label{uu-P}
$  u=P_{\Om \setm \clB_r} \ut$ in $\Om \setm \clB_r$
whenever $B_r \Subset \Om$ and
 \[
    \ut = \begin{cases}
      u & \text{in } \Om, \\
      0 & \text{on } \Xstar \setm \Om.
      \end{cases}
 \]
\end{enumerate}
\end{deff}
% \medskip needed as enumerate ends with display
\medskip

Note that it is only in the last conditions \ref{dd-Np0} and~\ref{uu-P} that 
Definitions~\ref{deff-sing-bdd}   and~\ref{deff-sing-unbdd} differ.
For bounded domains $\Om$, 
Definitions~\ref{deff-sing-bdd}  
 and~\ref{deff-sing-unbdd}
are equivalent by Proposition~9.2 in~\cite{BBglobal}.

\begin{deff} \label{deff-Green}
A \emph{Green function} in a domain $\Om$ 
  is a singular function that is normalized so that
\begin{equation} \label{eq-normalized-Green-intro-deff}
\cp(\{u  \ge b\},\Om) = b^{1-p},
\quad \text{when }
   0  <b < u(x_0).
\end{equation}
\end{deff}

In fact, it follows  from~\cite[Theorem~1.2]{BBglobal}
and~\cite[Theorem~9.3]{BBLgreen}
that Green functions $u$ satisfy 
\begin{equation} \label{eq-normalized-Green-intro}
\cp(\{u \ge b\},\{u >a\}) = (b-a)^{1-p},
\quad \text{when }
   0 \le a <b \le  u(x_0),
\end{equation}
where 
we interpret $\infty^{1-p}$ as $0$.
Here, we use the shorthand notation $\{u \ge b\}=\{x \in\Om:u(x) \ge b\}$ 
and similarly for other functions and constants.
The normalizations~\eqref{eq-normalized-Green-intro-deff} 
and~\eqref{eq-normalized-Green-intro} reflect the fact that the Green function 
in $\R^n$ satisfies the \p-Laplace equation~\eqref{p-Lap} with the
Dirac measure $\de_{x_0}$.

The following theorem   summarizes some of the main results 
in~\cite{BBglobal} and~\cite{BBLgreen}, 
which will be used in the sequel.

\begin{thm} \label{thm-Green}
\textup{(\cite[Theorems~1.1 and~1.2]{BBglobal} and~\cite[Theorem~1.3]{BBLgreen})}
\begin{enumerate}
\item \label{item-Green-a}
If $v$ is a singular function in $\Om$ 
with singularity at $x_0 \in \Om$ then there is a unique 
$A>0$ such that $A v$ is a Green function in $\Om$.
\item \label{item-Green-exist}
There is a Green function in a domain $\Om$ with singularity at $x_0 \in \Om$
if and only if either $\Cp(X \setm \Om)>0$ or $X$ is \p-hyperbolic, 
according to Definition~\ref{def-p-par} below.
\end{enumerate}
\end{thm}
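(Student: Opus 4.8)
The plan is to prove the two parts from the variational (capacitary) structure of singular functions, rather than invoking \cite{BBglobal,BBLgreen} directly. For part~\ref{item-Green-a} I would first identify the relevant capacitary potentials. Fix a singular function $v$ and $0<t<v(x_0)$. The truncation $w_t:=\min\{v/t,1\}$ lies in $\Np_0(\Om)$ by the zero boundary value condition \ref{dd-Np0} (or \ref{uu-P} when $\Om$ is unbounded), equals $1$ on $\{v\ge t\}$, and coincides with $v/t$ on $\{v<t\}$, where $v$ is \p-harmonic since $x_0\in\{v>t\}$. As an affine image of a \p-harmonic function, $w_t$ is \p-harmonic in $\{v<t\}$ with the correct continuous boundary data, so by uniqueness of energy minimizers it \emph{is} the capacitary potential of $(\{v\ge t\},\Om)$. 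Hence, writing $E(t):=\int_{\{v<t\}}g_v^p\,d\mu$,
\[
\cp(\{v\ge t\},\Om)=\int_\Om g_{w_t}^p\,d\mu=t^{-p}\int_{\{v<t\}}g_v^p\,d\mu=t^{-p}E(t).
\]
Putting $u=Av$ and using $\{u\ge b\}=\{v\ge b/A\}$ and $g_u=Ag_v$, the Green normalization \eqref{eq-normalized-Green-intro-deff} becomes $E(s)=A^{1-p}s$ for all $0<s<v(x_0)$. Thus part~\ref{item-Green-a} reduces to the single claim that $E$ is \emph{linear}, $E(s)=\gamma s$; granting this, $A=\gamma^{1/(1-p)}$ is the unique admissible constant.

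The linearity of $E$ is the crux, and I expect it to be the main obstacle, since it encodes a constant \p-energy flux across the level sets $\{v=s\}$ for which no PDE is available in the metric setting. My plan is to establish it through a \emph{series law} for \p-capacitors: for $0<t<T<v(x_0)$,
\[
\cp(\{v\ge T\},\Om)^{1/(1-p)}=\cp(\{v\ge t\},\Om)^{1/(1-p)}+\cp(\{v\ge T\},\{v>t\})^{1/(1-p)}.
\]
One inequality follows by pasting the scaled capacitary potentials of the outer condenser $(\{v\ge t\},\Om)$ and the inner annulus $(\{v\ge T\},\{v>t\})$ along the equipotential $\{v=t\}$ into a single competitor and optimizing the potential drop across the two pieces (the standard $\min_{c_1+c_2=1}(c_1^pC_1+c_2^pC_2)=(C_1^{1/(1-p)}+C_2^{1/(1-p)})^{1-p}$ computation); the reverse inequality comes from the exact energy splitting $\int_{\{v<T\}}g_v^p=\int_{\{v<t\}}g_v^p+\int_{\{t<v<T\}}g_v^p$ together with the capacitary-potential identifications above. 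Feeding $\cp(\{v\ge t\},\Om)=t^{-p}E(t)$ and $\cp(\{v\ge T\},\{v>t\})=(E(T)-E(t))(T-t)^{-p}$ into the series law yields, for fixed $T$ and every $t\in(0,T)$, a functional equation for $x:=E(t)/E(T)$ of the form $s^{1+\theta}x^{-\theta}+(1-s)^{1+\theta}(1-x)^{-\theta}=1$, with $\theta=1/(p-1)$ and $s=t/T$. Its left-hand side is strictly convex in $x$ with minimum value exactly $1$ attained only at $x=s$, so $x=s$ is the unique solution; hence $E(t)=(t/T)E(T)$, i.e.\ $E$ is linear. The genuinely metric difficulty lies in pasting Newtonian functions across a level set and in the capacitary-potential identifications without recourse to an Euler--Lagrange equation.

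For the existence half of part~\ref{item-Green-exist}, when $\Cp(X\setm\Om)>0$ or $X$ is \p-hyperbolic, I would construct a singular function as a limit of capacitary potentials of shrinking balls. Let $u_j$ be the capacitary potential of $(\clB_{r_j},\Om)$ with $r_j\downarrow0$, rescaled so that its level-set capacities match \eqref{eq-normalized-Green-intro-deff} on the available range. By Harnack's inequality and the iterated estimate in Proposition~\ref{prop:Harnack}, the family $\{u_j\}$ is locally equicontinuous in $\Om\setm\{x_0\}$, so a subsequence converges locally uniformly to a function $u$ that is \p-harmonic in $\Om\setm\{x_0\}$; lower semicontinuity and the comparison principle promote $u$ to a superharmonic function on $\Om$ with $u(x_0)=\sup_\Om u$, while the zero boundary data are recorded through condition \ref{uu-P} via the Perron solution $P_{\Om\setm\clB_r}\ut$. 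The hypotheses enter exactly in preventing degeneracy: $\Cp(X\setm\Om)>0$, or \p-hyperbolicity (Definition~\ref{def-p-par}) in the global case $\Om=X$, keeps the normalized potentials from collapsing to a constant, so that $\inf_\Om u=0$ survives and $u$ is a genuine singular function; part~\ref{item-Green-a} then normalizes it to a Green function.

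For the converse, if $\Cp(X\setm\Om)=0$ and $X$ is \p-parabolic, I would argue by contradiction: a Green function $u$ would extend across the negligible set $X\setm\Om$ to a nonconstant positive superharmonic function on $X$ tending to $0$ at $\binfty$, whereas \p-parabolicity forces the capacitary potentials entering its construction, and hence any such candidate, to be constant. The delicate point here, and the second main obstacle, is handling the unbounded case through the extended boundary $\bdystar\Om$ and the Perron condition \ref{uu-P}, so that the \p-parabolic/\p-hyperbolic dichotomy is matched correctly to existence.
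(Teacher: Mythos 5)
The paper does not actually prove Theorem~\ref{thm-Green}; it is quoted from \cite[Theorems~1.1 and~1.2]{BBglobal} and \cite[Theorem~1.3]{BBLgreen}, so your proposal can only be compared with those cited proofs. For part~\ref{item-Green-a}, your outline is essentially the route taken in \cite{BBLgreen}: identify the truncations $\min\{v/t,1\}$ and $\min\{\max\{(v-t)/(T-t),0\},1\}$ as the capacitary potentials of $(\{v\ge t\},\Om)$ and $(\{v\ge T\},\{v>t\})$, deduce $\cp(\{v\ge t\},\Om)=t^{-p}E(t)$, and force linearity of $E$. Your convexity argument is correct, and in fact slightly redundant as stated: the pasting inequality alone gives $s^{1+\theta}x^{-\theta}+(1-s)^{1+\theta}(1-x)^{-\theta}\le 1$, while strict convexity gives $\ge 1$ with equality only at $x=s$, so the separate ``reverse inequality'' step is not needed. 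What does need to be supplied are the lemmas you flag only in passing: $E(t)<\infty$, $\{v\ge t\}\Subset\Om$, membership of the truncations in $\Np_0(\Om)$ (this is exactly where \ref{dd-Np0}, resp.\ \ref{uu-P}, enters), and uniqueness of the Dirichlet minimizer; these occupy a substantial part of the cited papers and are not routine.

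The genuine gap is in the necessity half of part~\ref{item-Green-exist}. You argue that if $\Cp(X\setm\Om)=0$ and $X$ is \p-parabolic, a Green function would extend across $X\setm\Om$ to a nonconstant positive superharmonic function on $X$ \emph{tending to $0$ at $\binfty$}, and you derive the contradiction from that decay. But this decay is not derivable from the definition: condition \ref{uu-P} imposes the zero boundary value at $\binfty$ only in the weak Perron sense, precisely because the pointwise limit fails in \p-parabolic spaces --- this is the content of the example preceding Proposition~\ref{prop-parabolic} and of Proposition~\ref{prop-parabolic} itself, which show that $\liminf_{x\to\binfty}u(x)>0$ is what actually happens in the \p-parabolic regime. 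So the step on which your contradiction rests is exactly the statement the paper warns against, and the argument as written does not go through (the phrase ``the capacitary potentials entering its construction'' also conflates an arbitrary Green function with one produced by your approximation scheme). A correct route staying within your framework is to use the normalization \eqref{eq-normalized-Green-intro-deff} directly: if $\Cp(X\setm\Om)=0$, then after modification on a set of capacity zero, $\Np_0(\Om)$ coincides with $\Np(X)$, so $\cp(E,\Om)=\cp(E,X)$ for $E\subset\Om$; \p-parabolicity (or boundedness of $X$) gives $\cp(K,X)=0$ for every compact $K$, whence by Definition~\ref{deff-cp} and monotonicity $\cp(\{u\ge b\},\Om)=0$ for every $b$, contradicting $\cp(\{u\ge b\},\Om)=b^{1-p}>0$. (Alternatively, invoke the Liouville-type characterization that every positive superharmonic function on a \p-parabolic space is constant, without any claim of decay at $\binfty$.) Finally, the existence half of part~\ref{item-Green-exist} is the right scheme (rescaled capacitary potentials of shrinking balls plus Harnack equicontinuity), but the crux --- local uniform boundedness and nondegeneracy of the limit, which require quantitative annulus capacity estimates --- is passed over with the qualitative remark that the hypotheses ``prevent collapse''; that is where most of the work in \cite{BBglobal}, \cite{BBLgreen} lies.
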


\begin{deff} \label{def-p-par}
Assume that $X$ is unbounded.
Then $X$ is called \emph{\p-hyperbolic}
if $\cp(K,X)>0$ for some compact set $K\subset X$.
Otherwise, $X$ is \emph{\p-parabolic}. 
\end{deff}

If $X$ is \p-hyperbolic and $u$ is a Green function in
an unbounded domain $\Om$, then 
(under our standing assumptions)
\begin{equation} \label{eq-lim-infty}
\lim_{\Om \ni x \to \binfty} u(x)=0,
\end{equation}
by Lemma~11.1 in~\cite{BBglobal}.
The following example shows that this is not always true  in 
\p-parabolic spaces,
see  also Proposition~\ref{prop-parabolic} below.
This reflects the fact that in \p-parabolic spaces, the point at $\binfty$
has zero \p-capacity in a generalized sense.

Thus, \eqref{eq-lim-infty} cannot 
be used to capture the zero condition at infinity provided by \ref{uu-P},
which similarly as for finite boundary points only requires the boundary
value zero in a weaker sense.

\begin{example}
Let $X=\R^2$, $p=2$, $\Om= \R^2\setm \itoverline{B(0,1)}$ and $z_0 =2$ 
(using complex notation).
Then the Green function for $\Om$ with singularity at $z_0$ can be obtained by 
a M\"obius transformation
from the Green function in the unit disc $B(0,1)$ with
singularity at $0$
and can be calculated to be 
\[
  u(z) = A \log {}\biggl|\frac{2z-1}{2-z}\biggr|
\]
for some multiplicative constant $A>0$.
Then $\lim_{z\to\binfty} u(z) = A \log2>0$. 
Further, \ref{uu-P} is still true for this function,
since $\binfty$ has $2$-capacity zero 
in a generalized sense, and thus has harmonic measure zero.
This example is not just a coincidence, 
but illustrates a more general  phenomenon.
\end{example}

The following result shows that for unbounded sets in \p-parabolic spaces, 
we cannot require that Green functions tend to zero at $\binfty$.

\begin{prop} \label{prop-parabolic}
Assume that $X$ is \p-parabolic and that $K$
is a compact set with $\Cp(K)>0$, 
such that $\Om=X\setm K$ is a domain.
Let $u$ be a 
Green function in $\Om$ with singularity at~$x_0\in \Om$,
which exists by Theorem~\ref{thm-Green}\ref{item-Green-exist}.
Then 
\[
    \liminf_{x\to\binfty} u(x) >0.
\]
\end{prop}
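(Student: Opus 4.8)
The plan is to bound $u$ from below, away from its singularity, by scaled capacitary potentials of a fixed ball, and then use \p-parabolicity to push these potentials up to the constant~$1$. I would argue directly rather than by contradiction. First I would fix a radius $r_0$ so large that $K\cup\{x_0\}\subset B_{r_0}$. Then $X\setm\clB_{r_0}\subset\Om\setm\{x_0\}$, so $u$ is \p-harmonic, positive and (by Kinnunen--Shan\-mu\-ga\-lin\-gam~\cite{KiSh01}) continuous there. Since $X$ is unbounded and connected, $\bdy B_{r_0}\ne\emptyset$, and as $X$ is proper this sphere is compact, so
\[
c:=\min_{\bdy B_{r_0}}u>0.
\]

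Next, for $R>r_0$ I would let $v_R\in\Np_0(B_R)$ be the capacitary potential of $\clB_{r_0}$ in $B_R$, i.e.\ the \p-harmonic function in the annulus $G:=B_R\setm\clB_{r_0}$ with $v_R=1$ on $\clB_{r_0}$ and $v_R=0$ on $X\setm B_R$, so that $\int g_{v_R}^p\,d\mu=\cp(\clB_{r_0},B_R)$. On $\bdy G$ we have $cv_R\le u$ (indeed $cv_R=c\le u$ on $\bdy B_{r_0}$ and $cv_R=0<u$ on $\bdy B_R$). Since $u$ is superharmonic in $\Om\supset\clG$ while $cv_R$ is \p-harmonic in $G$, and $G\Subset\Om$ with $\Cp(X\setm G)>0$, the comparison built into Definition~\ref{deff-superharm-class}\ref{cond-c} yields
\[
u\ge c\,v_R\quad\text{in }B_R\setm\clB_{r_0}.
\]

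Finally I would let $R\to\infty$. The potentials $v_R$ increase with $R$, and their energies satisfy $\int g_{v_R}^p\,d\mu=\cp(\clB_{r_0},B_R)\to\cp(\clB_{r_0},X)=0$, precisely because $X$ is \p-parabolic and $\clB_{r_0}$ is compact (Definition~\ref{def-p-par}). Hence the increasing limit $v_\infty:=\lim_{R\to\infty}v_R$ is \p-harmonic on $X\setm\clB_{r_0}$ with zero energy, so it is constant; as $v_\infty=1$ on $\clB_{r_0}$, we get $v_\infty\equiv1$. This convergence $v_R\to1$ is exactly a restatement of \p-parabolicity, see \cite{BBglobal} and \cite{BBbook}. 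Passing to the limit in the displayed inequality gives $u\ge c$ on $X\setm\clB_{r_0}$, and therefore $\liminf_{x\to\binfty}u(x)\ge c>0$.

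I expect the main technical point to be the comparison step: to invoke Definition~\ref{deff-superharm-class}\ref{cond-c} one needs $cv_R\in C(\clG)$ up to the two spheres, which is where boundary regularity of balls enters. This can be secured via the capacity density (Wiener) criterion, which all boundary points of balls in complete doubling \p-Poincar\'e spaces satisfy, or sidestepped by first comparing on the slightly smaller annulus $B_R\setm\clB_{r_1}$ with $r_1>r_0$, using the Perron solution with continuous boundary data $1$ on $\bdy B_{r_1}$ and $0$ on $\bdy B_R$, and then letting $r_1\downarrow r_0$. I would stress that neither $\Cp(K)>0$ (used only to guarantee the existence of $u$ through Theorem~\ref{thm-Green}) nor the precise Green normalization is needed in this argument; it applies verbatim to any singular function in $\Om$.
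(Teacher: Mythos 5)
Your argument reaches the correct conclusion by a genuinely different route than the paper. The paper works directly on the unbounded set $G=X\setm\clB_R$: it notes that $G$ is a \p-parabolic set in the sense of Hansevi~\cite{HanseviPerron}, that $u$ is \p-harmonic in $G$ and attains its continuous boundary values on $\bdy B_R$, and then invokes Corollary~7.9 of~\cite{HanseviPerron} to conclude $u=P_G f$ for the data $f=u$ on $\bdy B_R$ and $f(\binfty)=m:=\min_{\bdy B_R}u$; since $f\ge m$ on $\bdystar G$, the constant $m$ belongs to the lower class and $u\ge m>0$ follows. In other words, the paper encodes \p-parabolicity qualitatively (``$\binfty$ is negligible for Perron solutions on \p-parabolic sets''), whereas you encode it quantitatively, as $\cp(\clB_{r_0},B_R)\to\cp(\clB_{r_0},X)=0$, and use capacitary potentials on an exhaustion by bounded annuli as lower barriers. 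Your route avoids the theory of Perron solutions on unbounded sets entirely, at the price of comparison and resolutivity machinery on bounded annuli; it also makes explicit (as you observe) that neither the Green normalization nor $\Cp(K)>0$ plays any role beyond guaranteeing existence, which is equally true of the paper's argument.

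One claim in your final paragraph must not be relied on: it is \emph{not} true in this generality that every boundary point of every ball in a complete doubling \p-Poincar\'e space satisfies the capacity density (Wiener) criterion. Metric spheres can have outward ``dead ends'': for instance, on a complete nonnegatively curved surface of revolution shaped like an infinite cylinder capped at one end (doubling and supporting a $1$-Poincar\'e inequality), the distance function from a point $y$ far down the cylinder has an isolated local maximum point $x^*$ on the cap, so near $x^*$ the complement of the ball $B(y,d(x^*,y))$ reduces to the single point $\{x^*\}$, which has zero \p-capacity for $p\le 2$; such a point is an irregular boundary point. Hence your primary route through Definition~\ref{deff-superharm-class}\ref{cond-c}, which needs $cv_R\in C(\clG)$, is not available in general, and your proposed sidestep is the correct fix. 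It can moreover be streamlined: there is no need to let $r_1\downarrow r_0$, since any fixed $r_1>r_0$ with $c_1:=\min_{\bdy B_{r_1}}u>0$ suffices. Indeed, $u/c_1$ is superharmonic in $G_R=B_R\setm\clB_{r_1}$, bounded below, and $\liminf_{G_R\ni y\to x}u(y)/c_1\ge f(x)$ for every $x\in\bdy G_R$, where $f:=1$ on $\bdy B_{r_1}$ and $f:=0$ on $\bdy B_R$ (using that $u$ is \p-harmonic, hence continuous, in a neighbourhood of $\bdy B_{r_1}$). Thus $u\ge c_1\uP_{G_R}f$ directly from Definition~\ref{def:Perron}, with no boundary regularity whatsoever. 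By the resolutivity theorem for continuous (Lipschitz--Newtonian) boundary data, see~\cite{BBbook}, $\uP_{G_R}f$ coincides in $G_R$ with the capacitary potential $v_R$ of $\clB_{r_1}$ in $B_R$, and your limiting argument $v_R\to1$ -- which is exactly where \p-parabolicity and the convergence $\cp(\clB_{r_1},B_R)\to\cp(\clB_{r_1},X)=0$ from~\cite{BBglobal} enter -- then completes the proof as you wrote it.
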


\begin{proof}
Let $R>0$ be so large that $K\Subset B_R$ and let
$m= \min_{\bdy B_R} u>0$.
Consider the open set $G=X\setm \clB_R$ and the continuous function 
$f$ on $\bdystar G$ given by 
$f(x) =   u(x)$ for $x\in \bdy B_R$ and $f(\binfty)=m$.
Note that $G$, being a subset of the \p-parabolic space $X$, is a \p-parabolic set in the 
sense of Definition~4.1 in Hansevi~\cite{HanseviPerron}.
Since $u$ is \p-harmonic in $G$ and attains the boundary values $f$ on $\bdy G$,
Corollary~7.9 in~\cite{HanseviPerron} implies that $u=P_G f$ and hence 
$u\ge m>0$ in $G$.
\end{proof}

\section{Uniqueness of Green functions}

\label{sect-unique}

In this section, we prove Theorem~\ref{thm-unique-Green-intro} when $\Cp(\{x_0\})=0$. 
The main step in the proof is captured by the following lemma.

\begin{lem}   \label{lem-unique-Green}
Let $u$ be a Green function in a domain $\Om$ with singularity
at~$x_0$.
Assume that $\Cp(\{x_0\})=0$ and that there is a sequence $r_j \searrow 0$ such that
\begin{equation}  \label{eq-ass-for-unique}
\lim_{j\to\infty} \biggl( \frac{\mu(B_{r_j})}{r_j^p} \biggr)^{1/(p-1)}  
\int_{r_j}^1 \biggl( \frac{\rho^p}{\mu(B_\rho)} \biggr)^{1/(p-1)} \frac{d\rho}{\rho} = \infty.
\end{equation}
For $r>0$, let
\[
m(r) = \min_{d(x,x_0)=r} u(x)
\quad \text{and} \quad 
M(r) = \max_{d(x,x_0)=r} u(x).
\]
Then for every ball $B_R \Subset \Om$,
\begin{equation}   \label{eq-lim-M(r)/cap}
\lim_{j\to\infty} \frac{m(r_j)}{\cp(B_{r_j},B_R)^{1/(1-p)}} 
= \lim_{j\to\infty} \frac{M(r_j)}{\cp(B_{r_j},B_R)^{1/(1-p)}} = 1.
\end{equation}
\end{lem}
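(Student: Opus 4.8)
The plan is to trap $B_r$ between superlevel sets of $u$ and then feed the exact normalization~\eqref{eq-normalized-Green-intro} into the monotonicity of the variational capacity; the oscillation of $u$ on the sphere is controlled by Lemma~\ref{lem-est-M-m-2B}, and the hypothesis~\eqref{eq-ass-for-unique} guarantees that this oscillation is of lower order. Write $C(r):=\cp(B_r,B_R)^{1/(1-p)}$.

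First I would record the geometry of the level sets. Since $u$ is superharmonic and nonconstant, the strong minimum principle \cite[Theorem~9.13]{BBbook} shows that its minimum over $\clB_\rho$ is attained on $\bdy B_\rho$ and nowhere inside; hence $B_\rho\subset\{u>m(\rho)\}$ and $\clB_\rho\subset\{u\ge m(\rho)\}$ for every $\rho$. On the other hand, $u$ is \p-harmonic in $\Om\setm\clB_\rho$ with boundary values $\le M(\rho)$ on $\bdy B_\rho$ and $0$ on $\bdystar\Om$ (in the Perron sense built into the definition of a singular function), so the maximum principle gives $u\le M(\rho)$ there and the strong maximum principle upgrades this to $u<M(\rho)$; thus $\{u>M(\rho)\}\subset B_\rho$. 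Applying these with $\rho=r$ and $\rho=R$ yields $\{u\ge b\}\subset B_r$ for every $b>M(r)$, together with $B_R\subset\{u>m(R)\}$ and $\{u>M(R)\}\subset B_R$.

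Next I would combine these inclusions with the fact that $\cp(\cdot,\cdot)$ increases in the source and decreases in the domain, and with~\eqref{eq-normalized-Green-intro}. For every $b>M(r)$ the inclusions $\{u\ge b\}\subset B_r$ and $B_R\subset\{u>m(R)\}$ give $(b-m(R))^{1-p}=\cp(\{u\ge b\},\{u>m(R)\})\le\cp(B_r,B_R)$; letting $b\searrow M(r)$ produces
\[
M(r)\ge m(R)+C(r).
\]
Symmetrically, $B_r\subset\{u\ge m(r)\}$ and $\{u>M(R)\}\subset B_R$ give $\cp(B_r,B_R)\le\cp(\{u\ge m(r)\},\{u>M(R)\})=(m(r)-M(R))^{1-p}$, that is,
\[
m(r)\le M(R)+C(r).
\]
Both are valid once $r$ is so small that $m(r)>M(R)$, which holds because $m(r)\to u(x_0)$ while $m(R),M(R)$ are fixed finite numbers. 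Since $\Cp(\{x_0\})=0$, continuity of capacity from above gives $\cp(B_r,B_R)\to\cp(\{x_0\},B_R)=0$, so $C(r)\to\infty$; dividing the two displays by $C(r)$ therefore yields $\limsup_j m(r_j)/C(r_j)\le1\le\liminf_j M(r_j)/C(r_j)$.

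It remains to close this gap, which is the crux. Here I would invoke Lemma~\ref{lem-est-M-m-2B} at a scale $\rho_j\simeq r_j$ chosen so that $2\rho_j<r_j<3\rho_j$ (its normalization hypothesis~\eqref{eq-normalized} follows from~\eqref{eq-normalized-Green-intro}, and its hypothesis $\{u>\sup_{3B'\setm 2B'}u\}\Subset\Om$ follows from the same maximum-principle argument as above). This bounds $M(r_j)-m(r_j)$ by $(\rho_j^p/\mu(B_{\rho_j}))^{1/(p-1)}\simeq(r_j^p/\mu(B_{r_j}))^{1/(p-1)}$. By the standard estimate $C(r)\simeq\int_r^R(\rho^p/\mu(B_\rho))^{1/(p-1)}\,d\rho/\rho$ for the capacity of concentric balls (see, e.g., \cite{BBLgreen}), the assumption~\eqref{eq-ass-for-unique} says precisely that this bound is $o(C(r_j))$, so $M(r_j)-m(r_j)=o(C(r_j))$. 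Combined with $m(r_j)\le M(r_j)$ and the previous paragraph, the $\limsup$ and $\liminf$ coincide and both ratios tend to $1$. The main difficulty is exactly this last point: comparability $m(r_j)\simeq M(r_j)\simeq C(r_j)$ is easy, but pinning down the constant to be $1$ requires both the sharp (constant-free) inequalities coming from~\eqref{eq-normalized-Green-intro} and the fact that the oscillation bound from Lemma~\ref{lem-est-M-m-2B} is genuinely lower order under~\eqref{eq-ass-for-unique}.
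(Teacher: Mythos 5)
Your proof is correct and is essentially the paper's own argument: trap $B_r$ between the superlevel sets $\{u>M(r)\}$ and $\{u\ge m(r)\}$, convert the normalization~\eqref{eq-normalized-Green-intro} and capacity monotonicity into the two sharp inequalities $M(r)\ge m(R)+\cp(B_r,B_R)^{1/(1-p)}$ and $m(r)\le M(R)+\cp(B_r,B_R)^{1/(1-p)}$, and then use Lemma~\ref{lem-est-M-m-2B} together with hypothesis~\eqref{eq-ass-for-unique} to show that the oscillation $M(r_j)-m(r_j)$ is of lower order. The only (harmless) deviations are that the paper obtains $\{u>M(r)\}\subset B_r$ from connectedness of superlevel sets (Lemma~9.9 in \cite{BBglobal}) rather than your Perron-comparison argument, and it normalizes by $M(r_j)$ using the Green-function growth estimate of Theorem~7.1 in \cite{BBLehIntGreen} instead of by $\cp(B_{r_j},B_R)^{1/(1-p)}$ with the two-sided annulus capacity estimate (which, as a citation matter, is found in \cite{BBLehIntGreen} rather than \cite{BBLgreen}) --- equivalent inputs that yield the same conclusion.
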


\begin{proof}
Since $\Cp(\{x_0\})=0$, we have $u(x_0)=\lim_{x\to x_0} u(x) =\infty$,
by Lemma~9.5 in Bj\"orn--Bj\"orn~\cite{BBglobal}.
Fix $R>0$ and let $r>0$ be sufficiently small so that $m(r)>M(R)$. 
By the continuity of $u$ and the
maximum principle for \p-harmonic functions
we see that 
\[
B_r \subset \{u\ge m(r)\}.
\]
Next, Lemma~9.9 in~\cite{BBglobal}
shows that the set $\{x\in\Om: u(x)>M(r)\}$
is connected and hence (using also that $u(x_0)=\infty$)
\[
\{u>M(r)\} \subset B_r.
\]
Similar statements hold with $r$ replaced by $R$ and 
we get
\[
\{u>M(r)\} \subset B_r \subset \{u\ge m(r)\} 
\subset \{u>M(R)\} \subset B_R \subset \{u \ge m(R)\}.
\]
Since $u$ is a Green function (and thus normalized), 
we then have by 
\eqref{eq-normalized-Green-intro} and
the monotonicity of the capacity that
\[  
m(r) - M(R) 
= \cp(\{u \ge m(r)\}, \{u>M(R)\})^{1/(1-p)} 
\le \cp(B_{r},B_R)^{1/(1-p)}
\]  
and 
\begin{align*}
M(r) - m(R) 
&= \lim_{\eps \to 0+} \cp(\{u\ge M(r)+\eps\}, \{u>m(R)-\eps\})^{1/(1-p)} \\
&\ge \cp(B_{r},B_R)^{1/(1-p)}.
\end{align*}
Hence
\[
\frac{m(r)}{M(r)} - \frac{M(R)}{M(r)}
\le \frac{\cp(B_{r},B_R)^{1/(1-p)}} {M(r)} \le 1 - \frac{m(R)}{M(r)}.
\]

Since $R$ is fixed and $M(r)\to\infty$ as $r\to0$, we have
\[
\lim_{r\to0} \frac{M(R)}{M(r)} = \lim_{r\to0} \frac{m(R)}{M(r)} = 0.
\]
It therefore suffices to show that $m(r_j)/M(r_j)\to1$, or equivalently
that 
\[
 \frac{M(r_j)-m(r_j)}{M(r_j)} \to0
  \quad \text{as } j\to\infty.
\]
To this end, Lemma~\ref{lem-est-M-m-2B} together with the estimate for $M(r)$ 
from 
Bj\"orn--Bj\"orn--Lehrb\"ack~\cite[Theorem~7.1]{BBLehIntGreen} 
and the doubling property of $\mu$ give 
that, for sufficiently small $r_j$,
\begin{equation*} %{align*}
0 \le \frac{M(r_j)-m(r_j)}{M(r_j)} 
\simle 
\biggl( \frac{\mu(B_{r_j})}{r_j^p} \biggr)^{1/(1-p)}
\biggl(\int_{r_j}^1 \biggl( \frac{\rho^p}{\mu(B_\rho)} \biggr)^{1/(p-1)} \frac{d\rho}{\rho} \biggr)^{-1}
\to 0,
\end{equation*} %{align*}
as $j \to \infty$,
by the assumption~\eqref{eq-ass-for-unique}.
This proves~\eqref{eq-lim-M(r)/cap}.
\end{proof}

\begin{cor}  \label{cor-unique-Green-1}
Let $u_k$  be a Green function in a 
domain $\Om_k \ni x_0$ with singularity at~$x_0$, $k=1,2$.
Assume that $\Cp(\{x_0\})=0$ and that 
\eqref{eq-ass-for-unique-limsup} holds.
Then
\[
    \lim_{x \to x_0} \frac{u_1(x)}{u_2(x)}=1.
\]
\end{cor}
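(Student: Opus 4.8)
The plan is to derive the corollary from Lemma~\ref{lem-unique-Green} in three stages: extract a good sequence of radii, compare $u_1$ and $u_2$ on the corresponding spheres, and then propagate this comparison into the intervening annuli by the comparison principle. First I would note that since the $\limsup$ in \eqref{eq-ass-for-unique-limsup} is infinite, there is a sequence $r_j\searrow0$ along which \eqref{eq-ass-for-unique} holds. As $\Cp(\{x_0\})=0$, Lemma~\ref{lem-unique-Green} applies to each of $u_1$ and $u_2$. Choose $R>0$ so small that $B_R\Subset\Om_1\cap\Om_2$ (possible since $x_0$ lies in the open set $\Om_1\cap\Om_2$), and apply Lemma~\ref{lem-unique-Green} to $u_1$ in $\Om_1$ and to $u_2$ in $\Om_2$ with this common ball. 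Writing $m_k(r),M_k(r)$ for the sphere minima/maxima of $u_k$ and $c_j:=\cp(B_{r_j},B_R)^{1/(1-p)}$ (which is the same for both functions, depending only on $B_{r_j}$, $B_R$ and $X$), the lemma gives
\[
\frac{m_k(r_j)}{c_j}\to1 \quad\text{and}\quad \frac{M_k(r_j)}{c_j}\to1 \qquad(k=1,2),
\]
and hence, dividing,
\[
\frac{M_1(r_j)}{m_2(r_j)}\to1 \quad\text{and}\quad \frac{m_1(r_j)}{M_2(r_j)}\to1.
\]
Since $m_k(r_j)\le u_k(x)\le M_k(r_j)$ for every $x$ with $d(x,x_0)=r_j$, this shows that $u_1/u_2\to1$ uniformly on the spheres $\{x:d(x,x_0)=r_j\}$.

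The remaining, and main, step is to pass from these spheres to the full limit $x\to x_0$, i.e.\ to control $u_1/u_2$ on the annuli between consecutive spheres. Here I would use that a positive multiple of a \p-harmonic function is again \p-harmonic: for $\la>0$ one has $g_{\la v}=\la g_v$ a.e., so scaling preserves both minimality and continuity, whence $(1\pm\eps)u_2$ is \p-harmonic (and in particular superharmonic) for $0<\eps<1$. Fix such an $\eps$. By the previous step there is $J$ such that $(1-\eps)u_2\le u_1\le(1+\eps)u_2$ on $\{x:d(x,x_0)=r_j\}$ for all $j\ge J$. For $j\ge J$ put $A_j:=B_{r_j}\setm\clB_{r_{j+1}}$; this is a nonempty open set with $A_j\Subset(\Om_1\cap\Om_2)$ once $r_j$ is small, with $\bdy A_j$ contained in the two bounding spheres and with $\Cp(X\setm A_j)>0$ (as $X\setm A_j\supset B_{r_{j+1}}$). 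Moreover $u_1$ and $(1\pm\eps)u_2$ are \p-harmonic and continuous on $\itoverline{A_j}$, since $A_j\subset(\Om_1\cap\Om_2)\setm\{x_0\}$. Applying Definition~\ref{deff-superharm-class}\ref{cond-c} to the superharmonic function $(1+\eps)u_2$ and the \p-harmonic function $u_1$ on $G=A_j$, the boundary inequality $u_1\le(1+\eps)u_2$ on $\bdy A_j$ yields $u_1\le(1+\eps)u_2$ in $A_j$; symmetrically, applying it to the superharmonic $u_1$ and the \p-harmonic $(1-\eps)u_2$ gives $(1-\eps)u_2\le u_1$ in $A_j$.

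Finally I would combine the two: the closed sets $\{x:r_{j+1}\le d(x,x_0)\le r_j\}$ with $j\ge J$ cover the punctured ball $\{x:0<d(x,x_0)\le r_J\}$, and on each of them the two-sided bound $(1-\eps)u_2\le u_1\le(1+\eps)u_2$ holds (on the bounding spheres by the first stage, in the interior $A_j$ by the second). Since $u_2>0$, this gives $|u_1(x)/u_2(x)-1|\le\eps/(1-\eps)$ for all $x$ with $0<d(x,x_0)\le r_J$. Letting $\eps\to0$ proves $\lim_{x\to x_0}u_1(x)/u_2(x)=1$.

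The step I expect to be the main obstacle is the propagation from spheres to annuli. Lemma~\ref{lem-unique-Green} controls $u_1/u_2$ only along the sparse sequence $r_j$, and under the $\limsup$ hypothesis the relative oscillation bound coming from Lemma~\ref{lem-est-M-m-2B} is small only along that sequence; consequently a naive sandwiching of $u_k(x)$ between the sphere values $m_k,M_k$ at intermediate radii need not force the ratio close to $1$. The comparison principle circumvents this precisely because it requires the two-sided estimate only on the two boundary spheres of $A_j$, with no interior oscillation estimate, and because multiples of \p-harmonic functions remain \p-harmonic so that $(1\pm\eps)u_2$ are admissible competitors.
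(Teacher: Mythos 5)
Your proof is correct and follows essentially the same route as the paper's: choose $r_j\searrow 0$ from the limsup hypothesis, apply Lemma~\ref{lem-unique-Green} to both $u_1$ and $u_2$ with a common ball $B_R\Subset\Om_1\cap\Om_2$ to compare them on the spheres $\{x:d(x,x_0)=r_j\}$, propagate the comparison into the annuli $B_{r_j}\setm\clB_{r_{j+1}}$ by the maximum/comparison principle, and let $\eps\to0$. The only cosmetic difference is that you handle both inequalities symmetrically via $(1\pm\eps)u_2$ (justifying that scalar multiples of \p-harmonic functions are \p-harmonic), whereas the paper proves one inequality and then interchanges the roles of $u_1$ and $u_2$.
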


\begin{proof}
Choose $r_j \searrow 0$ such that \eqref{eq-ass-for-unique} holds.
 Lemma~\ref{lem-unique-Green}  implies that for each $\eps>0$, 
there is $j_0$ such that for all $j\ge j_0$,
\[
\max_{d(x,x_0)=r_j} u_1 
\le (1+\eps) \cp(B_{r_j},B_R)^{1/(1-p)}
\le (1+\eps)^2 \min_{d(x,x_0)=r_j} u_2.
\]
The maximum principle applied to the annuli $B_{r_j}\setm \clB_{r_{j+1}}$ 
 then gives that 
$u_1 \le (1+\eps)^2 u_2$ 
also in 
$B_{r_j}\setm \clB_{r_{j+1}}$for every 
$j \ge j_0$, 
and hence  in $B_{r_j}$.
Letting $\eps \to 0$ and applying this also with the roles
of $u_1$ and $u_2$ interchanged completes the proof.
\end{proof}

\begin{proof}[Proof of Theorem~\ref{thm-unique-Green-intro}]
When $\Cp(\{x_0\})>0$, the Green function is a particular multiple 
of the capacitary potential for $\cp(\{x_0\},\Om)$ and is therefore unique, see
Bj\"orn--Bj\"orn--Lehrb\"ack~\cite[Theorem~1.3]{BBLgreen}
(for bounded $\Om$) and
Bj\"orn--Bj\"orn~\cite[Corollary~10.3]{BBglobal}
(for general $\Om$).   
Assume therefore
that $\Cp(\{x_0\})=0$.
Let $\eps >0$.
By Corollary~\ref{cor-unique-Green-1}, there is $r<\eps$ such that 
\[
     u \le (1+\eps) v 
  \quad \text{in } B_{2r} \setm \{x_0\}.
\]
Since also $u=v=0$ on $\bdystar \Om$,
it follows from the definitions of singular functions and Perron solutions
that
\[
     u 
   = P_{\Om \setm \clB_r} u 
   \le (1+\eps) P_{\Om \setm \clB_r} v
   = (1+\eps) v
\quad \text{in } \Om \setm \clB_r.
\]
Hence $u \le (1+\eps) v$ in $\Om$.
Letting $\eps \to 0$ and applying this also with the roles
of $u$ and $v$ interchanged shows that $u=v$ in $\Om\setm \{x_0\}$,
while $u(x_0)=v(x_0)=\infty$ follows from $\Cp(\{x_0\})=0$.
\end{proof}

Condition~\eqref{eq-ass-for-unique-limsup}
can be determined by the following exponent sets:
\begin{equation} \label{eq-lQo}
\lQ_0 := \biggl\{   q>0:  
\frac{\mu(B_{r})}{\mu(B_{R})}   \simle  \Bigl( \frac{r}{R} \Bigr)^q  \text{ for all }
0<r<R\le1 \biggr\},
\end{equation}
and 
\begin{equation} \label{eq-uQo}
\uQ_0 := \biggl\{   q>0:    
\frac{\mu(B_{r})}{\mu(B_{R})} \simge \Bigl( \frac{r}{R} \Bigr)^q  \text{ for all }
0<r<R\le1 \biggr\}.
\end{equation}

Note that $\lQ_0$ and $\uQ_0$ are nonempty intervals, 
under our standing assumptions.
The general case when condition~\eqref{eq-ass-for-unique-limsup} 
holds in Remark~\ref{mainthm-remark-intro} follows from 
Proposition~\ref{prop-uQ-lQ}\ref{k-a} below.

\begin{prop} \label{prop-uQ-lQ}
$\quad$
\begin{enumerate}
\item \label{k-a}
If $p\notin \lQ_0$ or $p\in \uQ_0$, 
then 
\eqref{eq-ass-for-unique-limsup}
 holds.
\item \label{k-b}
If  $p <\sup \lQ_0$,
then 
\eqref{eq-ass-for-unique-limsup} fails.
\end{enumerate}
\end{prop}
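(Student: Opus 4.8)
The plan is to abbreviate $\beta:=1/(p-1)>0$ and $h(\rho):=\mu(B_\rho)/\rho^{p}$ for $0<\rho\le1$, so that \eqref{eq-ass-for-unique-limsup} says precisely that $\limsup_{r\to0}\Phi(r)\Psi(r)=\infty$, where $\Phi(r):=h(r)^{\beta}$ and $\Psi(r):=\int_r^1 h(\rho)^{-\beta}\,d\rho/\rho$. I would first translate the exponent sets into one-sided bounds on $h$: for $r\le\rho\le1$, membership $p\in\uQ_0$ gives $h(\rho)\le C\,h(r)$, the failure $p\notin\lQ_0$ gives, for each $k$, a pair with $h(r_k)>k\,h(R_k)$, and $q\in\lQ_0$ gives $\mu(B_\rho)\ge C^{-1}\mu(B_r)(\rho/r)^{q}$. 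The only structural fact I need is that $\mu(B_\rho)$ is positive, finite and nondecreasing, so that $h$ is bounded between positive constants on each $[\eps,1]$.

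For part (a), if $p\in\uQ_0$ I use $h(\rho)\le C h(r)$ on $[r,1]$ to get $h(\rho)^{-\beta}\ge C^{-\beta}h(r)^{-\beta}$ there, hence $\Psi(r)\ge C^{-\beta}h(r)^{-\beta}\log(1/r)$ and $\Phi(r)\Psi(r)\ge C^{-\beta}\log(1/r)\to\infty$; here the full limit is infinite. The case $p\notin\lQ_0$ is the one I expect to be delicate, since $h$ need not be monotone and I only obtain a $\limsup$. Choosing pairs $0<r_k<R_k\le1$ with $h(r_k)>k\,h(R_k)$, the crucial observation is that these scales must separate: if $R_k<2r_k$ then monotonicity of $\mu(B_\rho)$ forces $h(r_k)/h(R_k)=\frac{\mu(B_{r_k})}{\mu(B_{R_k})}(R_k/r_k)^{p}<2^{p}$, so necessarily $R_k\ge2r_k$ as soon as $k\ge2^{p}$. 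I then keep only the annulus $[R_k/2,R_k]\subset[r_k,1]$ in $\Psi(r_k)$, where monotonicity gives $h(\rho)\le2^{p}h(R_k)$, so that $\Psi(r_k)\ge(\log2)2^{-p\beta}h(R_k)^{-\beta}$ and therefore $\Phi(r_k)\Psi(r_k)\ge(\log2)2^{-p\beta}(h(r_k)/h(R_k))^{\beta}>(\log2)2^{-p\beta}k^{\beta}\to\infty$. Finally $r_k\to0$, since a subsequence bounded away from $0$ would keep $h(r_k)/h(R_k)$ bounded by the oscillation of $h$ on some $[\eps,1]$, contradicting $h(r_k)/h(R_k)>k$. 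Thus $\limsup_{r\to0}\Phi(r)\Psi(r)=\infty$.

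For part (b), since $\lQ_0$ is an interval and $p<\sup\lQ_0$, I may fix $q\in\lQ_0$ with $q>p$. The bound $\mu(B_\rho)\ge C^{-1}\mu(B_r)(\rho/r)^{q}$ on $[r,1]$ gives $(\rho^p/\mu(B_\rho))^{\beta}\le(C r^{q}/\mu(B_r))^{\beta}\rho^{(p-q)\beta}$, and since $q>p$ the resulting exponent makes $\int_r^1\rho^{(p-q)\beta-1}\,d\rho\le r^{-(q-p)\beta}/((q-p)\beta)$ converge as $r\to0$; the powers of $r$ then collapse to $\Psi(r)\le C^{\beta}((q-p)\beta)^{-1}\Phi(r)^{-1}$, so $\Phi(r)\Psi(r)$ stays bounded and \eqref{eq-ass-for-unique-limsup} fails. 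The single real obstacle in the whole argument is the $p\notin\lQ_0$ case of part (a): taming the non-monotone $h$ through the scale-separation $R_k\ge2r_k$ and excluding accumulation of the $r_k$ away from the origin.
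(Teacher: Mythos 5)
Your proof is correct and follows essentially the same route as the paper: the case $p\in\uQ_0$ and part (b) are verbatim the paper's arguments, and for $p\notin\lQ_0$ both proofs pick violating pairs $(r_k,R_k)$ and bound the integral from below by its contribution on a single dyadic annulus at scale $R_k$, yielding a lower bound of order $k^{1/(p-1)}$. The only differences are refinements on your side: you integrate over the inner annulus $[R_k/2,R_k]$, so that only monotonicity of $\rho\mapsto\mu(B_\rho)$ is needed where the paper uses $[R_j,2R_j]$ and the doubling property, and you spell out the scale separation $R_k\ge 2r_k$ and the fact that $r_k\to0$, points the paper dispatches by passing to a subsequence.
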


\begin{proof}
\ref{k-a}
If $p\notin \lQ_0$, it follows from the definition of $\lQ_0$ that there are
$0<r_j<R_j \searrow0$ such that for each $j=1,2,\dots$\,,
\[
\frac{\mu(B_{r_j})}{\mu(B_{R_j})} \ge j \biggl( \frac{r_j}{R_j} \biggr)^p.
\]
By passing to a subsequence, we can assume that also $r_j\searrow0$
and that $R_1\le\tfrac12$.
Now, by the doubling property of $\mu$ we have for each $j$,
\[  %\begin{align*}
\int_{r_j}^1 \biggl( \frac{\rho^p}{\mu(B_\rho)} \biggr)^{1/(p-1)} \frac{d\rho}{\rho} 
\ge \int_{R_j}^{2R_j} \biggl( \frac{\rho^p}{\mu(B_\rho)} \biggr)^{1/(p-1)} \frac{d\rho}{\rho} 
\simeq \biggl( \frac{R_j^p}{\mu(B_{R_j})} \biggr)^{1/(p-1)}.
\]  %\end{align*}
Hence
\[   %\begin{align*}
\biggl( \frac{\mu(B_{r_j})}{r_j^p} \biggr)^{1/(p-1)}  
\int_{r_j}^1 \biggl( \frac{\rho^p}{\mu(B_\rho)} \biggr)^{1/(p-1)} \frac{d\rho}{\rho} 
\simge
 \biggl( \frac{\mu(B_{r_j})}{\mu(B_{R_j})} \frac{R_j^p}{r_j^p} \biggr)^{1/(p-1)} 
\ge j^{1/(p-1)}.
\]   %\end{align*}
Letting $j\to\infty$ concludes the proof when $p\notin \lQ_0$.

If $p\in \uQ_0$, then for any $0<r<1$ by the definition of $\uQ_0$,
\[  %\begin{align*}
\int_{r}^1 \biggl( \frac{\rho^p}{\mu(B_\rho)} \biggr)^{1/(p-1)} \frac{d\rho}{\rho} 
\simge \int_{r}^{1} \biggl( \frac{r^p}{\mu(B_r)} \biggr)^{1/(p-1)} \frac{d\rho}{\rho} 
= \biggl( \frac{r^p}{\mu(B_{r})} \biggr)^{1/(p-1)} \log \frac1r 
\]  %\end{align*}
and the rest of the argument is the same as for $p\notin \lQ_0$, 
upon letting $r\to0$.

\ref{k-b}
If $p <\sup \lQ_0$, then
there is $q \in \lQ_0$ with $q>p$.
Let $0<r<\frac12$.
Then
\[
   \frac{\mu(B_r)}{r^q}
   \simle \frac{\mu(B_\rho)}{\rho^q}
\quad \text{for } r<\rho <1,
\]
and thus
\begin{align*}
& \biggl( \frac{\mu(B_{r})}{r^p} \biggr)^{1/(p-1)}  
\int_{r}^1 \biggl( \frac{\rho^p}{\mu(B_\rho)} \biggr)^{1/(p-1)} \frac{d\rho}{\rho} \\
& \qquad \qquad \qquad % Ok here
\simle
r^{(q-p)/(p-1)} \int_{r}^1 \biggl( \frac{\mu(B_{\rho})}{\rho^q} 
\frac{\rho^p}{\mu(B_\rho)} \biggr)^{1/(p-1)} \frac{d\rho}{\rho} 
\simeq   1.
\qedhere
\end{align*}
\end{proof}

The following example is based on Example~3.3 in 
Bj\"orn--Bj\"orn--Lehrb\"ack~\cite{BBLringcap} and shows that
the range of $p$'s for which \eqref{eq-ass-for-unique} holds 
and $\Cp(\{0\})=0$ can be large.

\begin{example} \label{ex-abcd-alt}
Fix $1<a<b<c<d$.
We are now going to construct a weight $w$ on $\R^n$ so that
for $a<p\le c$ we have uniqueness 
in Theorem~\ref{thm-unique-Green-intro} with $x_0=0$ and at the same
time the associated capacity $\Cpw(\{0\})=0$, 
thus showing that this range can be a nondegenerate interval.

Let
\[
    \la =\frac{(c-a)(d-b)}{(b-a)(d-c)}
\]
and 
\[
   \alp_k = 2^{-\la^k} \text{ and } 
\be_k = \al_k^{(d-b)/(d-c)} = \al_{k+1}^{(b-a)/(c-a)},
 \quad  k=0,1,2,\dots.
\]
Note that $\la>1$ and thus $\al_k\to0$ as $k\to\infty$.
Also, $\al_{k+1}\ll\be_k\ll\al_k$.
Then the weight $w(x)=w(|x|)$ given by 
\[
      w(\rho)=\begin{cases}
             \be_{k}^{c-a}\rho^{a-n} = \alp_{k+1}^{b-a}\rho^{a-n}, 
  & \text{if } \alp_{k+1} \le \rho \le \be_k, 
                     \ k=0,1,2,\dots,\\
             \be_{k}^{c-d}\rho^{d-n} = \alp_{k}^{b-d}\rho^{d-n}, 
  & \text{if } \be_k \le \rho \le \alp_{k}, 
                     \ k=0,1,2,\dots,\\
                     \alp_0, & \text{if } \rho \ge \alp_0,
        \end{cases}
\] 
is continuous and $1$-admissible on $\R^n$, by 
Theorem~10.5 in Bj\"orn--Bj\"orn--Lehrb\"ack~\cite{BBLringcap}.
Moreover by Example~3.4 in~\cite{BBLringcap}, we have
\begin{equation} \label{eq-ex-abcd-alt}
\lQo=(0,a]   
    \quad \text{and} \quad
\uQo=[d,\infty).
\end{equation}

Let $\Om \ni 0$ be a domain such that $\Cpw(\Rn \setm \Om)>0$ or $p<n$
(which guarantees that the Green function below exists).
Theorem~\ref{thm-unique-Green-intro} 
and Remark~\ref{mainthm-remark-intro} then show that
the Green function 
with respect to the measure
$d\mu=w\,dx$ in $\Om$
with singularity at $0$ is unique whenever $p>a$.
Next, note that $\al_{k+1} \le \tfrac12 \be_k$ for sufficiently large $k$.
Since $\mu$ is a doubling measure, a direct calculation 
shows that
\[
\mu(B_{\be_k}) \simeq
\mu(\tfrac12 B_{\be_k})  
\simeq \mu(B_{\be_k} \setm \tfrac12 B_{\be_k}) 
\simeq \be_k^{c-a}  \int_{\frac12\be_k}^{\be_k} \rho^{a-1} \,d\rho
\simeq \be_k^c.
\]
If $p=c$, the general capacity estimate 
(1.7) in  Bj\"orn--Bj\"orn--Lehrb\"ack~\cite{BBLehIntGreen}
then implies that in the weighted space $(\R^n,w\,dx)$,
\begin{align*}
C_{c,w}(\{0\}) 
&\simeq \biggl( \int_0^1 \biggl( \frac{\rho^c}{\mu(B_\rho)} \biggr)^{1/(p-1)} \frac{d\rho}{\rho} \biggr)^{1-p}  \\
&\le \biggl(  \sum_{k=1}^\infty   \biggl( \frac{\be_k^c}{\mu(B_{\be_k})} \biggr)^{1/(p-1)}  
\int_{\frac12\be_k}^{\be_k} \frac{d\rho}{\rho} \biggr)^{1-p} = 0.
\end{align*}
It follows that $\Cpw(\{0\})=0$ for $p\le c$.
In particular, we have uniqueness and at the same time
$\Cpw(\{0\})=0$ (i.e.\ the Green function $u$ is unbounded and  $u(x_0)=\infty$) 
whenever $a<p \le c$.
At the same time, when $p>c$, uniqueness still holds since  $\Cpw(\{0\})>0$
by Proposition~8.2 in~\cite{BBLringcap}.

We can make two modifications in this example:
\begin{enumerate}
\renewcommand{\theenumi}{\textup{(\roman{enumi})}}%
\item Let $\wt(x)$ be a bounded measurable function on $\R^n$ such that
$\inf_{\R^n} \wt >0$, and let $w_2(x)=w(x)\wt(x)$.
Then $w_2$ is also $1$-admissible and has the same 
exponent sets~\eqref{eq-ex-abcd-alt} as $w$,
so we get the same conclusions also for this nonradial weight. 

\item We can moreover equip $X=\R^n$ with a different metric,
e.g.\ an $\ell^q$ metric with $1 \le q  \le \infty$. 
As long as it is biLipschitz equivalent to the Euclidean metric,
the measure will still be globally doubling and supporting a global
$1$-Poincar\'e inequality on $X$, by \cite[Proposition~4.16]{BBbook},
and the exponent sets~\eqref{eq-ex-abcd-alt} 
will remain the same.
So the conclusion is the same also in this case.

\end{enumerate}

\end{example}

\section{Nonuniqueness of Green functions}
\label{sect-nonuniq}

In the following example we construct a nonstandard metric on $\R^2$ so
that Green functions with singularity at the origin
are not unique, both in $\R^2$ and in discs $\{x\in\R^2: |x|<R\}$.
This proves Theorem~\ref{thm-nonunique}.
The metric is comparable to the Euclidean metric 
and possesses a Finsler type structure. 

\begin{example} \label{ex-Finsler-1}
{\rm(}\emph{Proof of Theorem~\ref{thm-nonunique}}{\rm)}
Let $X=\R^2$ be equipped with the weighted Lebesgue measure
$d\mu = w \, dx$, where $w(x)=|x|^\alp$ with $\alp >-1$,
and a nonstandard metric $d$ that we will now define.
We also let $1<p<2+\alp$.
Note that the weight $w$ is \p-admissible, see
Hei\-no\-nen--Kil\-pe\-l\"ai\-nen--Martio~\cite[Corollary~15.35]{HeKiMa}. 

For $z \in \R^2 \setm \{0\}$ we use polar coordinates  with
complex notation and
write $z=re^{i\theta}$   with $r>0$ and  
$\theta \in \R$.
Thus our functions will be $2\pi$-periodic in $\theta$.
Equivalently, we restrict $\theta$ to the unit circle
$\Spe=[0,2\pi)$ (identifying $0$ and $2\pi$).
In polar coordinates, the standard orthonormal basis  for the tangent space $T_z \R^2$ 
at $z=re^{i\theta}\ne 0$  is given by 
\[
\partial_r:=
\frac{\partial}{\partial r}
\quad \text{and} \quad
\frac{1}{r}\partial_\theta
:=\frac{1}{r}\frac{\partial}{\partial \theta}.
\]
The metric $d$ on $\R^2$ is defined  using an $\ell^1$-norm on $T_z \R^2$ by 
\[
d(x,y)=\inf_{\gamma_{x,y}}\int_0^1 \|\gadot(t)\|_{\ga(t)}\ dt,
\quad x,y \in \R^2,
\]
where the infimum is taken over all Lipschitz curves $\ga_{x,y}$ 
connecting $x$ and $y$,
and
\begin{equation*} 
 \|\gadot(t)\|_{z}=|a|+|b|
 \quad \text{when } z \ne 0 \text{ and }
 \gadot(t)= a \partial_r + \frac{b}{r}\partial_\theta,
\end{equation*}
while $\|\gadot(t)\|_{0}=0$.

The metric $d$ on $\R^2$
is comparable to the Euclidean metric by a factor of $\sqrt{2}$, 
since the Euclidean norm of $\gadot$ is given by $\sqrt{a^2+b^2}$. 
It follows that
the measure~$\mu$ is doubling and supports a \p-Poincar\'e inequality
also on $(\R^2,d)$, by e.g.\ \cite[Proposition~4.16]{BBbook}.

Let $u\in \Liploc(\R^2\setminus\{0\})$. 
By Rademacher's theorem (see e.g.\ \cite[Theorem~3.2]{Evans}) 
for a.e.\ $z\in \R^2$ 
there exists a linear mapping (the differential) $du(z):\R^2\to\R^2$ such that
\[
\lim_{h\to 0}\frac{|u(z+h)-u(z)-du(z)(h)|}{\|h\|_z}=0.
\]
Expressed in polar coordinates with $h= a\bdy_r + b \frac{1}{r}\bdy_\theta$, 
the differential can be written as
\[
du(z)(h) = a\bdy_r u+ b \frac{1}{r}\bdy_\theta u.
\]
Using this
and Theorem~6.1 in Cheeger~\cite{Che99} (or Theorem~13.5.1
in Heinonen--Koskela--Shan\-mu\-ga\-lin\-gam--Tyson~\cite{HKSTbook}),
we obtain that for any $u\in \Liploc(\R^2\setminus\{0\})$ and
a.e.\ $z=re^{i\theta}\in\R^2$,
\begin{align*}
 g_u(z)&=\Lip u(z) 
 := \limsup_{\rho\to0} \sup_{\|h\|_z\le \rho} 
\frac{|u(z+h)-u(z)|}{\|h\|_z} \\
&= \lim_{\rho\to 0} \sup_{|a|+|b|\leq \rho}\frac{|a\bdy_r u(z) + b \frac{1}{r}\bdy_\theta u(z)|}{|a|+|b|} 
= \max\biggl\{|\bdy_r u(z)|,\frac{|\bdy_\theta u(z)|}{r}\biggr\}.
\end{align*}
This calculation expresses the fact that the dual basis of
$\{\bdy_r, \tfrac{1}{r} \bdy_r\}$ is given by $\{dr, r\,d\theta\}$, 
and the dual of the $\ell^1$-norm on the tangent space is the $\ell^\infty$-norm 
on the cotangent space.

Next, we let $f:\Spe\to \R$ be a $1$-Lipschitz function and 
\begin{equation}   \label{eq-def-ap}
a_p=\frac{2+\alp-p}{p-1}>0.
\end{equation}
Recall that we have assumed that $1<p<2+\alp$ and that $\alp > -1$.
Let $0 < R < \infty$ be fixed and define
\[
u(re^{i\theta})= (r^{-a_p}-R^{-a_p})e^{a_p f(\theta)}
\quad \text{for } 0 <r < R \text{ and } \theta \in \Spe.
\]
Also let $u(0)=\infty$.
We shall show that $u$ is a singular function in $\Om=\{x \in \R^2:|x| < R\}$
with singularity at $x_0=0$, with respect to $(\R^2,d,\mu)$.

First, we show that $u$ is \p-harmonic in $\Om\setm\{0\}$.
To this end, we have
\[
|\partial_r u(re^{i\theta})| = a_p r^{-a_p-1} e^{a_p f(\theta)}
\]
and, since $|f'(\theta)|\le1$ for a.e.\ $\theta \in \Spe$,
\[
\frac{|\partial_\theta u(re^{i\theta})|}{r} 
   = a_p \frac{r^{-a_p}-R^{-a_p}}{r}  e^{a_p f(\theta)} |f'(\theta)|
    \le |\partial_r u(re^{i\theta})|
\quad \text{a.e.}
\]
Hence  
\[
g_u(re^{i\theta}) = |\partial_r  u(re^{i\theta})| 
= a_p r^{-a_p-1} e^{a_p f(\theta)}
\quad \text{a.e.}
\]

Let  $0<a<b<R$ and $\phi \in \Lipc(G_{a,b})$, where $G_{a,b}=\{x : a<|x|<b\})$.
Fix $\theta \in \Spe$ for the moment.
A straightforward calculation shows 
that the function $r\mapsto u(re^{i\theta})$ 
is a \p-harmonic function on $(0,\infty)$ 
with respect to  the measure $r^{1+\alp}\,dr$.
Indeed, the corresponding Euler--Lagrange equation is
$-(|u'|^{p-2}u' r^{1+\alp})'=0$, see 
Hei\-no\-nen--Kil\-pe\-l\"ai\-nen--Martio~\cite[p.~102]{HeKiMa}.
(They consider $\R^n$ with $n \ge 2$, but 
the same calculation applies for $n=1$.)

Thus, comparing the functions $r\mapsto u(re^{i\theta})$ and 
$r\mapsto (u+\varphi)(re^{i\theta})$ on the interval $(a,b)$, we get
\begin{align*}
   \int_a^b g_u(re^{i\theta})^p r^{1+\alp} \, dr
  &= \int_a^b |\partial_r u(re^{i\theta})|^p r^{1+\alp}\,dr \\
  &\le \int_a^b |\partial_r (u+\phi)(re^{i\theta})|^p r^{1+\alp}\,dr 
  \le   \int_a^b g_{u+\varphi}(re^{i\theta})^p r^{1+\alp}\,dr.
\end{align*}
Integrating over $\theta \in \Spe$  yields
\[
\int_{G_{a,b}} g_u^p \,dx 
   \leq \int_{G_{a,b}} g_{u+\varphi}^p \, dx.
\]
Hence $u$ is \p-harmonic in $G_{a,b}$ for every $0<a<b<R$ and thus in 
$\Om\setm\{0\}$.

Since $u(0)=\infty$ and 
$\lim_{G \ni x \to z} u(x)=0$ 
whenever $|z|=R$, 
it follows from
Theorem~7.2 in
Bj\"orn--Bj\"orn--Lehrb\"ack~\cite{BBLgreen} 
that $u$ is a singular function in $\Om$.
Thus by Theorem~\ref{thm-Green}\ref{item-Green-a}, 
there is $A>0$  
such that $Au$ is a Green function in  $\Om$ with singularity at~$0$.
Since $f$ was an arbitrary $1$-Lipschitz function on  $\Spe$,
this shows the nonuniqueness of Green functions in  $\Om$
with singularity at $0$.

If we instead let
\[
u(re^{i\theta})= r^{-a_p}e^{a_p f(\theta)}
\quad \text{for }   r>0 \text{ and } \theta \in \Spe,
\]
with  $u(0)=\infty$,
then the same calculation shows that $u$ is \p-harmonic in $\R^2 \setm \{0\}$.
Since $\Cp(\{0\})=0$, it follows from Lemma~4.3 in~\cite{BBLgreen} 
that $u$ is superharmonic in $\R^2$,
and thus, by Definition~\ref{deff-sing-unbdd},
$u$ is a singular function in $X=(\R^2,d,\mu)$ with singularity at $0$.
Therefore,
by Theorem~\ref{thm-Green}\ref{item-Green-a}, 
there is $A>0$  
such that $Au$ is a Green function in  $X$ with singularity at $0$.
Again, since $f$ was an arbitrary $1$-Lipschitz function on  $\Spe$,
the nonuniqueness follows
also in this case.

Finally, since  the exponent sets 
in \eqref{eq-lQo} and~\eqref{eq-uQo}
for the measure $\mu$ are
\begin{equation*} %\label{eq-ex-abcd-alt}
\lQo=(0,2+\alp]   
    \quad \text{and} \quad
\uQo=[2+\alp,\infty),
\end{equation*}
the Green function  with singularity at $0$ is
unique for $p \ge 2+\alp$ in every domain $\Om \ni 0$
in $(\R^2,d,\mu)$, by Theorem~\ref{thm-unique-Green-intro}
and Remark~\ref{mainthm-remark-intro}.

Note that $(\R^2,d,\mu)$ is \p-hyperbolic if and only if $p< 2+\alp$,
while $\Cp(\{0\})=0$ if and only if $p \le 2+\alp$.
Note also
that in the unweighted case (with $\alp=0$) the range when 
the Green functions are not unique  is $1<p<2$.
\end{example}

\begin{remark}
One can make similar examples in unweighted and weighted $\R^n$, $n \ge 3$.
Since the circle $\Spe$ has to be replaced by the $(n-1)$-dimensional 
sphere $\Spn$, the norm on $T_z\R^n$ is for $z=r\om$ and $h\in\R^n$, 
with $\om\in \Sp^{n-1}$ and $r>0$, given by 
\[
\|h\|_{z} = |h_r| + |h_{\Sp^{n-1}}|, 
\]
where $h_r=\frac{h\cdot z}{|z|}$ is the length of
the projection onto the radial direction at $z$ 
and $h_{\Sp^{n-1}}=h-\frac{h\cdot z}{|z|^2} z$ is the component tangential to 
the sphere $\{x\in\R^n:|x|=r\}$ at $z$.
The details  become somewhat technical.
With unweighted $\R^n$ this gives an Ahlfors $n$-regular example
with nonunique Green functions for $1<p<n$.
We leave the details to the interested reader.
\end{remark}

Example~\ref{ex-Finsler-1} also proves Corollary~\ref{cor-nonls}:

\begin{proof}[Proof of Corollary~\ref{cor-nonls}]
Consider $(X,d,\mu)$ as in Example~\ref{ex-Finsler-1}.
Let $f_1(\theta) \equiv 0$ and $f_2$ be a nonnegative $1$-Lipschitz function
on $\Spe$ such that $f_2(\theta)=0$ if and only if $\theta=0$.
Next let
\[
u_j(re^{i\theta})= (r^{-a_p}-1)e^{a_p f_j(\theta)}
\quad \text{for } 0 <r < 1 \text{ and } \theta \in \Spe, \ j=1,2,
\]
where $a_p$ is as in~\eqref{eq-def-ap}.
By Example~\ref{ex-Finsler-1}, 
$u_1$ and $u_2$ are \p-harmonic in the punctured disc
$\Om:=\{x\in\R^2:0<|x|<1\}$.
Moreover, $u_1 \le u_2$ in $\Om$ and $u_1(x)=u_2(x)$ if and only if 
$x=re^{i\theta}\in\Om$ and $\theta=0$.
This  shows that the strong comparison principle fails for \p-harmonic functions
on $X$.

For the second part consider $p=2$ above and let $v=u_2 - u_1$.
Then $v \ge 0$ in $\Om$ and $v(x)=0$ if and only if 
$x=re^{i\theta}\in\Om$ and $\theta=0$.
It thus follows from the strong maximum principle that $v$ cannot
be \p-harmonic in $\Om$.
\end{proof}

\end{document}